\newtheorem{thm}{Theorem}[section]
\newtheorem{lem}[thm]{Lemma}
\newtheorem{prop}[thm]{Proposition}
\newtheorem{cor}[thm]{Corollary}
\theoremstyle{definition}
\newtheorem{ex}[thm]{Example}
\theoremstyle{remark}
\newtheorem{rem}[thm]{Remark}
\DeclareMathOperator{\End}{End}
\DeclareMathOperator{\Aug}{Aug}
\newcommand{\Q}{\mathbb{Q}}
\newcommand{\C}{\mathbb{C}}
\newcommand{\I}{\mathbb{I}}
\newcommand{\A}{\mathcal{A}}
\newcommand{\calH}{\mathcal{H}}
\newcommand{\mapI}{\mathcal{I}}
\newcommand{\mapL}{\mathcal{L}}
\newcommand{\calT}{\mathcal{T}}
\newcommand{\indk}{\mathbf{k}}
\newcommand{\inds}{\mathbf{s}}
\newcommand{\indz}{\mathbf{z}}
\begin{document}

\title{Rooted tree maps for multiple $L$-values from a perspective of harmonic algebras}

\author{Hideki Murahara}
\address[Hideki Murahara]{The University of Kitakyushu, \endgraf 
4-2-1 Kitagata, Kokuraminami-ku, Kitakyushu, Fukuoka, 802-8577, Japan}
\email{hmurahara@mathformula.page}

\author{Tatsushi Tanaka}
\address[Tatsushi Tanaka]{Department of Mathematics, Faculty of Science, Kyoto Sangyo University, \endgraf 
Motoyama, Kamigamo, Kita-ku, Kyoto-city 603-8555, Japan}
\email{t.tanaka@cc.kyoto-su.ac.jp}

\author{Noriko Wakabayashi}
\address[Noriko Wakabayashi]{Center of Physics and Mathematics, Institute for Liberal Arts and Sciences, Osaka Electro-Communication University  \endgraf 
1130-70 Kiyotaki, Shijonawate-City, Osaka 575-0063 Japan}
\email{wakabayashi@osakac.ac.jp}

\subjclass[2010]{05C05, 16T05, 11M32}
\keywords{Connes--Kreimer Hopf algebra of rooted trees, Rooted tree maps, Harmonic products, Multiple zeta values, Multiple $L$-values}

\begin{abstract}
In this paper, we show the image of rooted tree maps 
themselves forms a 
subspace of the kernel of the evaluation map of multiple $L$-values. For its proof, we define the diamond product as a modified harmonic product and find its properties. We also show that $\tau$-conjugate rooted tree maps are 
their antipodes.

\end{abstract}

\maketitle

\section{Introduction}\label{intro}
Let $\mu_r$ be the set of $r$th roots of unity.
For an index set $(\indk;\inds)=(k_1,\ldots ,k_l;s_1,\ldots ,s_l)$ with $k_1,\ldots, k_l\geq 1, s_1,\ldots ,s_l\in\mu_r, (k_1,s_1)\neq (1,1)$, the multiple $L$-value of shuffle type (abbreviated as MLV) is defined in \cite{AK04} by the convergent series
\[
L(\indk;\inds)=\lim_{m\to\infty}\sum_{m>m_1>\cdots >m_l>0}\frac{s_1^{m_1-m_2}\cdots s_{l-1}^{m_{l-1}-m_l}s_l^{m_l}}{m_1^{k_1}\cdots m_l^{k_l}}. 
\]
If $r=1$, this is nothing but the multiple zeta value (abbreviated as MZV). 
The MZVs and the MLVs are well-studied in the last three decades. 

The index set $(\indk;\inds)$ is often identified by the word $\indz_{\indk,\inds}:=z_{k_1,s_1}\cdots z_{k_l,s_l}$, where $z_{k,s}$ stands for $x^{k-1}y_{s}$, in the noncommutative polynomial algebra $\A_r:=\Q\langle x,y_s | s\in\mu_r \rangle$. Then MLVs are algebraically discussed via the $\Q$-linear map $\mapL:\A_r^0\to\C$ defined by $\mapL(1)=1$ and $\mapL(\indz_{\indk,\inds})=L(\indk;\inds)$. ($\A_r^0$ is a subalgebra of $\A_r$ generated by admissible words, detailed in the next section.)

On the other hand, (non-planar) rooted trees are finite and connected graphs with no cycles and a special vertex called the root. For example, 
\[
\,\begin{xy}
   {(0,0) \ar @{{*}-{*}} (0,0)}, 
\end{xy}\,\,
,\qquad
\,\begin{xy}
   {(0,-1.5) \ar @{{*}-{*}} (0,1.5)}, 
\end{xy}\,\,
,\qquad
\,\begin{xy}
   {(0,-3) \ar @{{*}-{*}} (0,0)}, 
   {(0,0) \ar @{{*}-{*}} (0,3)},
\end{xy}\,\,
,\qquad
\,\begin{xy}
   {(0,-1.5) \ar @{{*}-{*}} (2,1.5)}, 
   {(2,1.5) \ar @{{*}-{*}} (4,-1.5)},
\end{xy}\,\,
,\qquad
\,\begin{xy}
   {(0,-4.5) \ar @{{*}-{*}} (0,-1.5)}, 
   {(0,-1.5) \ar @{{*}-{*}} (0,1.5)},
   {(0,1.5) \ar @{{*}-{*}} (0,4.5)},
\end{xy}\,\,
,\qquad
\,\begin{xy}
   {(0,-3) \ar @{{*}-{*}} (2,0)}, 
   {(2,0) \ar @{{*}-{*}} (4,-3)},
   {(2,0) \ar @{{*}-{*}} (2,3)},
\end{xy}\,
,\qquad
\,\begin{xy}
   {(0,0) \ar @{{*}-{*}} (2,3)}, 
   {(2,3) \ar @{{*}-{*}} (4,0)},
   {(4,0) \ar @{{*}-{*}} (4,-3)},
\end{xy}\,\,
,\qquad
\,\begin{xy}
   {(0,-1.5) \ar @{{*}-{*}} (3,1.5)}, 
   {(3,-1.5) \ar @{{*}-{*}} (3,1.5)},
   {(3,1.5) \ar @{{*}-{*}} (6,-1.5)},
\end{xy}\,\, ,
\]
and so on. The topmost vertex of each of rooted trees represents the root. The algebra generated by them has a Hopf algebra structure known as Connes-Kreimer Hopf algebra of rooted trees, appeared in \cite{CK98} in the study of perturbative quantum field theory and 
well-studied in the last quarter century. 

Rooted tree maps (abbreviated as RTMs) first defined in \cite{Tan19} based on the Connes-Kreimer Hopf algebra of rooted trees induce a certain class of relations among MZVs. In other words, a part of $\ker\mapL$ comes from the RTMs if $r=1$. Although this phenomenon is expected to be extended naturally to any positive integer $r$, the only result proved in \cite{TW22} 
is 
for RTMs taken conjugation by certain involution $\tau$. In this paper, we study some algebraic properties of RTMs for MLVs using the harmonic algebra as are studied in \cite{MT22} in the MZVs' case. 
We then show the aforementioned expectation 
is 
true and $\tau$-conjugate RTM is nothing but its antipode. 

\section{Main results}\label{results}
Let $\A_r^1$ and $\A_r^0$ be subalgebras of $\A_r$ given by
\[
A_r\supset A_r^1=\Q\oplus A_{r,+}^1 \supset A_r^0=\Q\oplus A_{r,+}^0,
\]
where 
\[
A_{r,+}^1=\bigoplus_{s\in\mu_r}\A_r y_s,\qquad A_{r,+}^0=\bigoplus_{s\in\mu_r}x\A_r y_s\oplus\bigoplus_{s,t\in\mu_r,t\neq 1}y_t \A_r y_s.
\]
Each word $\indz_{\indk,\inds}\in\A_{r,+}^0$ is called admissible and corresponds to the index set $(\indk;\inds)$ with $(k_1,s_1)\neq (1,1)$. Let $z_s^{\delta}=x+\delta(s)y_s\in\A_r$, where $\delta(1)=0$ and $\delta(s)=1$ if $s\neq 1$. 

Denote by $\calH$ the $\Q$-vector space generated by rooted forests, i.e., disjoint unions of rooted trees. This $\calH$ 
has a structure of connected Hopf algebra, 
which is briefly described in the next section. We assign to any rooted tree $t$ a linear map $\widetilde{t}\in\End_{\Q}(\A_r)$, which we call a RTM, elaborated in 
Section \ref{rtm}. The assignment $\widetilde{\ }$ is known to be an algebra homomorphism, and hence we can assign to any $f\in\calH$ a linear map $\widetilde{f}\in\End_{\Q}(\A_r)$. 
Using the notation of the diamond product $\diamond_s$ ($s\in\mu_r$), which is described in Section \ref{har}, we have the following result. 
\begin{thm} \label{main1} 
 For $f\in\calH$, there exists a unique $F_f\in\A_1^1$ such that
 \[
  \widetilde{f}(z_{s}^{\delta}w)=z_{s}^{\delta}(F_{f}\diamond_{s}w)
 \]
 holds for any $s\in\mu_{r}$ and any $w\in\A_{r}$. 
\end{thm}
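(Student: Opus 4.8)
The plan is to first pin down the element $F_f$ together with its uniqueness, and then to prove the displayed identity for that candidate by induction on the number of vertices of $f$. For uniqueness, specialize the asserted identity to $s=1$ (so that $z_1^\delta=x$) and $w=1$: since $1$ is the unit for $\diamond_1$, this forces $\widetilde f(x)=x(F_f\diamond_1 1)=xF_f$, and as left concatenation by $x$ is injective on $\A_r$ the element $F_f$ is uniquely determined. It therefore suffices to prove existence for the unique $F_f$ characterized by $\widetilde f(x)=xF_f$. That this $F_f$ actually lies in $\A_1^1$ (and in particular is independent of $r$ and of $s$) I would extract from two structural properties of RTMs recalled in Section~\ref{rtm}, namely that $\widetilde f$ preserves the subalgebra $\A_1$ and sends $x$ into $x\A_1^1$; I would isolate these as a short preliminary lemma.

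Every nonempty rooted forest is either a product of two smaller forests or a single tree $B_+(g)$, which sets up the induction. For $f=1$ one has $\widetilde 1=\id$ and $F_1=1$, and the identity is just the unit law $1\diamond_s w=w$. For the one-vertex tree $f=\fo=B_+(1)$ I would compute $\widetilde{\fo}(z_s^\delta w)$ directly from the definition of the single-vertex map and verify that it equals $z_s^\delta(y\diamond_s w)$, so that $F_f=y$; this is precisely the anchor where the recursive definition of $\diamond_s$ from Section~\ref{har} is engineered to reproduce the action of $\widetilde{\fo}$, and I expect the check to reduce to comparing the two recursions term by term.

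For the product step, write $f=f_1f_2$ with $f_1,f_2$ nonempty. Since $\widetilde{\ }$ is an algebra homomorphism, $\widetilde f=\widetilde{f_1}\circ\widetilde{f_2}$, and applying the inductive hypothesis twice gives
\[
\widetilde f(z_s^\delta w)=\widetilde{f_1}\bigl(z_s^\delta(F_{f_2}\diamond_s w)\bigr)=z_s^\delta\bigl(F_{f_1}\diamond_s(F_{f_2}\diamond_s w)\bigr).
\]
To finish I need the module-type associativity $a\diamond_s(b\diamond_s w)=(a\diamond_1 b)\diamond_s w$ for $a,b\in\A_1^1$ and $w\in\A_r$, which I expect to be among the properties of the diamond product established in Section~\ref{har}; it yields $\widetilde f(z_s^\delta w)=z_s^\delta\bigl((F_{f_1}\diamond_1 F_{f_2})\diamond_s w\bigr)$, and hence $F_{f_1f_2}=F_{f_1}\diamond_1 F_{f_2}$, consistently with the characterization $\widetilde f(x)=xF_f$.

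The remaining and genuinely hard case is a tall tree $f=B_+(g)$. Here I would use the recursive description of $\widetilde{B_+(g)}$ arising from the Connes--Kreimer coproduct (natural growth) together with the single-vertex map, reducing $\widetilde{B_+(g)}(z_s^\delta w)$ to expressions in $\widetilde g$ to which the inductive hypothesis applies. The crux, and where I expect the main obstacle, is to show that this Hopf-algebraic recursion is compatible with the recursion defining $\diamond_s$: the bookkeeping of grafted edges on the tree side must match the stuffing of letters on the harmonic side. I would handle this by proving an auxiliary identity describing how $\diamond_s$ interacts with the basic concatenation operators occurring in the $B_+$-recursion, established by its own induction on $w$. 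Throughout, the cases $s=1$ and $s\neq 1$ should be treated uniformly, since the letter $z_s^\delta=x+\delta(s)y_s$ and the definition of $\diamond_s$ are arranged so that the extra summand $\delta(s)y_s$ is absorbed, obviating any separate argument for roots of unity different from $1$.
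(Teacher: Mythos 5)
Your skeleton matches the paper's: uniqueness by specializing $w=1$ (your $s=1$ variant works, since $v\diamond_1 1=\varphi^2(v)=v$ for $v\in\A_1$, though note the paper stresses that $1$ is in general only a \emph{left} unit for $\diamond_s$), a double induction on $f$ and on $w$, the product step via the homomorphism property together with $u\diamond_s(v\diamond_s w)=(u\diamond_1 v)\diamond_s w$ (Lemma \ref{lem1}), and the base cases as you describe. But the step you defer to "an auxiliary identity describing how $\diamond_s$ interacts with the basic concatenation operators" is the actual content of the theorem, and your outline both omits it and mislocates it. The identity needed is Proposition \ref{prop7}:
\[
F_{f}\diamond_{s}v z_{s'}^{\delta}w=\sum_{(f)}(F_{f'}\diamond_{s}v)\,z_{s'}^{\delta}\,(F_{f''}\diamond_{s'}w),\qquad \Delta(f)=\sum_{(f)}f'\otimes f'',
\]
and it is forced not by the $B_+$-recursion (rule (I\hspace{-.01em}I)) but by rule (I\hspace{-.15em}V\hspace{-.06em}): whenever $w=w'z_t^\delta$ ends in a letter, $\widetilde{f}(z_s^\delta w'z_t^\delta)=\sum_{(f)}\widetilde{f'}(z_s^\delta w')\widetilde{f''}(z_t^\delta)$ for \emph{every} forest $f$, not only for trees, so the compatibility of the family $(F_f)$ with the Connes--Kreimer coproduct across an inserted $z_{s'}^\delta$ is needed at every inductive step in $\deg(w)$. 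Proving it is where the bulk of the work lies: it rests on Lemma \ref{lem:y} (the $f=$ one-vertex case), Lemma \ref{lem6}, Lemma \ref{lem1}, and the Hochschild cocycle property \eqref{Hoch} of $B_+$, with a delicate cancellation in the $f=B_+(g)$, $w=1$ subcase. None of this is visible in your outline.

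Two smaller points. First, in the genuine $B_+$ step with $w=1$ the paper does not "reduce to $\widetilde g$ via the coproduct" but uses rule (I\hspace{-.01em}I) directly together with the intertwining relations $\psi_s\varphi R_x=R_{z_s^\delta}\psi_s\varphi$ and $\psi_s\varphi R_y=R_{z-z_s^\delta}\psi_s\varphi$ on $\A_1^1$, which is how the recursion $F_{B_+(g)}=R_yR_{x+2y}R_y^{-1}(F_g)$ appears; your proposal never produces a candidate formula for $F_{B_+(g)}$, whereas the paper defines $F_f$ by this explicit recursion up front and then verifies the theorem for it. Second, your preliminary lemma ($\widetilde f$ preserves $\A_1$ and sends $x$ into $x\A_1^1$) is true but is not stated in Section \ref{rtm} and itself requires an induction of the same shape; defining $F_f$ by $\widetilde f(x)=xF_f$ therefore does not shortcut any of the existence argument.
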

%
\noindent The product $\diamond_s$ is a variation of the harmonic product. Indeed, Lemma \ref{prop0} below asserts that
\begin{align}\label{eq-1}
v\diamond_s w=\psi_s(\varphi (v) \ast \psi_s^{-1}(w)), 
\end{align}
where $v\in\A_1, w\in\A_r$, and $\ast$ is the harmonic product. Here, $\psi_s = \varphi\mapI M_s$, where $\varphi$ is the automorphism on $\A_r$ determined by $\varphi (x)=z:=x+y_1$ and $\varphi (y_s)=z_s^{\delta}-z(=\delta(s)y_s-y_1)$ for $s\in\mu_r$, and $\mapI$ and $M_s (s\in\mu_r)$ are linear maps on $\A_r$ defined by
\[
\mapI(\indz_{\indk,\inds}x^a)=z_{k_1,s_1}z_{k_2,s_1s_2}\cdots z_{k_l,s_1\cdots s_l}x^a, \qquad M_s(\indz_{\indk,\inds}x^a)=z_{k_1,ss_1}z_{k_2,s_2}\cdots z_{k_l,s_l}x^a
\]
for $a\geq 0$. Note that $\varphi$ is an involution. 
According to \cite{KT08}, we have
\begin{align}\label{eq0}
z_s^{\delta}\cdot \psi_s (\A_{1,+}^1 \ast \A_{r,+}^1)\subset\ker\mapL
\end{align}
for any $s\in\mu_r$. Hence, for $s\in\mu_r, w\in\A_{r,+}^1$, and $f\in\Aug(\calH)$, where $\Aug(\calH)$ denotes the augmentation ideal of $\calH$, i.e., $\calH=\Q\oplus\Aug(\calH)$, we have
\[
\widetilde{f}(z_s^{\delta}w)=z_s^{\delta}(F_f\diamond_s w)=z_s^{\delta}\cdot \psi_s(\varphi(F_f) \ast \psi_s^{-1}(w))\in\ker\mapL. 
\]
Thus we have the following:
\begin{cor}
 For $f\in\Aug(\calH)$, 
 we have $\widetilde{f}(\A_{r,+}^{0}) \subset\ker\mapL$. 
\end{cor}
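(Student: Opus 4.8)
The plan is to reduce the corollary, by $\Q$-linearity of both $\widetilde{f}$ and $\mapL$, to the single case $\widetilde{f}(z_s^{\delta}w)\in\ker\mapL$ with $s\in\mu_r$ and $w\in\A_{r,+}^1$, which is exactly the membership recorded in the displayed computation preceding the corollary. Concretely, I would first show that $\A_{r,+}^0$ is contained in the span of the elements $z_s^{\delta}w$ ($s\in\mu_r$, $w\in\A_{r,+}^1$), and then apply that computation term by term.

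For the reduction, I rewrite the defining decomposition as
\[
\A_{r,+}^0=x\,\A_{r,+}^1\oplus\bigoplus_{t\in\mu_r,\,t\neq1}y_t\,\A_{r,+}^1,
\]
using $\bigoplus_{s\in\mu_r}\A_r y_s=\A_{r,+}^1$. Since $\delta(1)=0$ gives $z_1^{\delta}=x$, the first summand is exactly $z_1^{\delta}\,\A_{r,+}^1$. For $t\neq1$ we have $z_t^{\delta}=x+y_t$, so $y_t w=z_t^{\delta}w-xw=z_t^{\delta}w-z_1^{\delta}w$ for every $w\in\A_{r,+}^1$; hence each remaining summand lies in $z_t^{\delta}\A_{r,+}^1+z_1^{\delta}\A_{r,+}^1$. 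Therefore $\A_{r,+}^0\subset\sspan\{z_s^{\delta}w\mid s\in\mu_r,\ w\in\A_{r,+}^1\}$, and the corollary follows once $\widetilde{f}(z_s^{\delta}w)\in\ker\mapL$ is known for all such $s,w$.

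The latter is the chain $\widetilde{f}(z_s^{\delta}w)=z_s^{\delta}(F_f\diamond_s w)=z_s^{\delta}\cdot\psi_s(\varphi(F_f)\ast\psi_s^{-1}(w))$ supplied by Theorem \ref{main1} and \eqref{eq-1}, followed by \eqref{eq0}. To invoke \eqref{eq0} I must verify the two membership conditions $\psi_s^{-1}(w)\in\A_{r,+}^1$ and $\varphi(F_f)\in\A_{1,+}^1$. The first is routine: writing $\psi_s^{-1}=M_s^{-1}\mapI^{-1}\varphi$, each factor sends a word ending in some $y$-letter to a combination of such words, so each preserves $\A_{r,+}^1$. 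The second is the step I expect to be the real obstacle, since Theorem \ref{main1} only gives a priori $F_f\in\A_1^1=\Q\oplus\A_{1,+}^1$, while $\varphi$ fixes $\A_{1,+}^1$ but not the constant term; I therefore need the constant term of $F_f$ to vanish when $f\in\Aug(\calH)$. I would obtain this from the weight-homogeneity of RTMs established in Section \ref{rtm}: for a forest $f$ of weight $n\ge1$ the map $\widetilde{f}$ raises weight by $n$, whereas $\diamond_s$ and left multiplication by $z_s^{\delta}$ are weight-additive, so matching weights in $\widetilde{f}(z_s^{\delta}w)=z_s^{\delta}(F_f\diamond_s w)$ forces $F_f$ to be homogeneous of weight $n\ge1$, whence $F_f\in\A_{1,+}^1$. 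By the $\Q$-linearity of $f\mapsto F_f$ (which is guaranteed by the uniqueness clause of Theorem \ref{main1}) this extends to all $f\in\Aug(\calH)$, and the normalization $F_1=1$, read off from $\widetilde{1}=\id$, serves as a consistency check.
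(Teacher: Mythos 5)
Your proposal is correct and follows the paper's own route: the corollary is precisely the displayed chain $\widetilde{f}(z_s^{\delta}w)=z_s^{\delta}(F_f\diamond_s w)=z_s^{\delta}\cdot\psi_s(\varphi(F_f)\ast\psi_s^{-1}(w))\in\ker\mapL$ from Theorem \ref{main1}, \eqref{eq-1}, and \eqref{eq0}, combined with the (left implicit in the paper) observation that $\A_{r,+}^0$ is spanned by the elements $z_s^{\delta}w$ with $s\in\mu_r$ and $w\in\A_{r,+}^1$. The only point where you diverge is in certifying $F_f\in\A_{1,+}^1$ for $f\in\Aug(\calH)$: the paper's explicit recursion for $F_f$ (stated just before Proposition \ref{prop7}) makes $F_f\in\A_1y$ immediate for positive-degree forests, whereas your weight-homogeneity argument reaches the same conclusion without using the construction; both are sound.
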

\begin{rem}
We note that this result was expected but not proved in \cite{TW22}. Still we do not know the way to prove this directly from the definition of RTM (except that the case of $r=1$, the MZV case, is done in \cite{Tan19}). 
\end{rem}
Let $S$ be the antipode of $\calH$. Then, for $f\in\calH$, we find the antipode map $\widetilde{S(f)}$ is also described similarly by using the diamond product $\diamond_s$. 
\begin{thm} \label{main2} 
 For $f\in\calH$, there exists a unique $G_f\in\A_1^1$ such that
 \[
  \widetilde{S(f)}(z_{s}^{\delta}w)=z_{s}^{\delta}(G_{f}\diamond_{s}w)
 \]
 holds for any $s\in\mu_{r}$ and any $w\in\A_{r}$. 
\end{thm}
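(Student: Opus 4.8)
The plan is to obtain this as a direct consequence of Theorem \ref{main1} rather than by reworking the inductive analysis of RTMs from scratch. The antipode is by definition a linear endomorphism $S\colon\calH\to\calH$, so $S(f)\in\calH$ for every $f\in\calH$, and since $\widetilde{\ }$ extends to an algebra homomorphism $\calH\to\End_{\Q}(\A_r)$, the map $\widetilde{S(f)}$ is a well-defined element of $\End_{\Q}(\A_r)$. First I would simply apply Theorem \ref{main1} to the element $S(f)\in\calH$ in place of $f$. This produces a (unique) element of $\A_1^1$, which I would name $G_f:=F_{S(f)}$, satisfying
\[
 \widetilde{S(f)}(z_s^{\delta}w)=z_s^{\delta}\bigl(F_{S(f)}\diamond_s w\bigr)=z_s^{\delta}\bigl(G_f\diamond_s w\bigr)
\]
for every $s\in\mu_r$ and every $w\in\A_r$. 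This yields existence immediately.

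For uniqueness, suppose $G,G'\in\A_1^1$ both satisfy the displayed identity for all $s$ and $w$. The cleanest route is to invoke the uniqueness assertion already contained in Theorem \ref{main1} (applied to $S(f)$), which forces $G=G'$. Alternatively one can argue directly: left multiplication by the nonzero element $z_s^{\delta}$ is injective on the free algebra $\A_r$, so $G\diamond_s w=G'\diamond_s w$ for all $w$; then via the identity \eqref{eq-1}, namely $v\diamond_s w=\psi_s(\varphi(v)\ast\psi_s^{-1}(w))$, together with the invertibility of $\psi_s$ and $\varphi$, one reduces to $\varphi(G)\ast u=\varphi(G')\ast u$ for all $u\in\A_r$. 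Specializing to $u=1$ and using that $1$ is the unit for $\ast$ gives $\varphi(G)=\varphi(G')$, hence $G=G'$.

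Because of this, the stated existence and uniqueness are essentially formal, and I do not expect a genuine obstacle there. The substantive content—and where I expect the real work to lie, consistent with the paper's slogan that "$\tau$-conjugate rooted tree maps are their antipodes"—is to pin down an \emph{explicit} description of $G_f$, rather than leaving it as the tautological $F_{S(f)}$. I would try to track how the antipode $S$ interacts with the diamond product through \eqref{eq-1}, relating $G_f$ to $F_f$ via the antipode of the harmonic algebra $\A_1^1$. The main obstacle is therefore the compatibility bookkeeping: one must control how $S$ passes through the maps $\varphi$, $\mapI$, $M_s$ (and hence $\psi_s$) entering the definition of $\diamond_s$, and reconcile the Hopf-algebraic antipode on $\calH$ with the harmonic antipode on the target. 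Establishing that compatibility cleanly is the step I would expect to require the most care.
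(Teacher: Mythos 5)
Your proof is correct for the statement as literally written: since $S$ is an endomorphism of $\calH$, one may apply Theorem \ref{main1} to $S(f)$ and set $G_f:=F_{S(f)}$, and both existence and uniqueness follow at once (your back-up uniqueness argument via \eqref{eq-1} is also sound, though specializing to $w=1$ and using $v\diamond_s 1=\psi_s\varphi(v)$ is even quicker, as in the paper's proof of Theorem \ref{main1}). This is, however, a genuinely different route from the paper's. The paper first \emph{defines} $G_f$ by an explicit recursion ($G_{\mathbb{I}}=1$, $G_{\fo}=-y$, $G_{B_+(g)}=L_{2x+y}(G_g)$, $G_{gh}=G_g\diamond_1 G_h$) and then proves by induction on $\deg(f)$ that this particular polynomial satisfies the identity, the key input being the convolution-inverse relation $\sum_{(f)}F_{f'}\diamond_1 G_{f''}=0$ from \cite{MT22} together with $\sum_{(f)}f'S(f'')=0$. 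What the paper's argument buys is precisely the explicit, computable description of $G_f$ that you correctly identify as the substantive content; what your argument buys is brevity, and it is worth noting that the only property of $G_f$ actually invoked later (in the proof of Theorem \ref{main3}, via Lemma \ref{MT:prop_5.1}) is the identification $G_f=F_{S(f)}$, which your definition makes tautological. So your shortcut would suffice for the logical architecture of the paper, at the cost of not re-deriving the recursive formula for $G_f$; there is no gap, and no circularity, since the paper's own proof of Theorem \ref{main2} also presupposes Theorem \ref{main1}.
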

As is defined in \cite{TW22}, let $\tau$ be the anti-automorphism on $\A_r$ defined by $\tau(x)=y_1,\tau(y_1)=x$, and $\tau(y_s)=-y_s$ ($s\neq 1$). Note that $\tau$ is an involution. 
Then we show the following result, which is a generalization of \cite[Theorem 1.5]{MT22}. 
\begin{thm} \label{main3} 
 For $f\in\calH$, we have $\widetilde{S(f)}=\tau\widetilde{f}\tau$.
\end{thm}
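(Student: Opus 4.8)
The plan is to combine Theorems \ref{main1} and \ref{main2} with the formula \eqref{eq-1} for the diamond product, reducing the identity $\widetilde{S(f)}=\tau\widetilde{f}\tau$ to an identity relating $G_f$ and $F_f$. By Theorem \ref{main2} the left-hand side is governed by $G_f\in\A_1^1$, so I first want to compute how the right-hand side $\tau\widetilde{f}\tau$ acts on a word of the form $z_s^{\delta}w$ and express it, too, as $z_s^{\delta}(H\diamond_s w)$ for some $H\in\A_1^1$ determined by $F_f$. Then the theorem will follow from the uniqueness clause in Theorem \ref{main2} once I show $H=G_f$.

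\medskip

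First I would analyze the interaction of $\tau$ with the structure $z_s^{\delta}w$. Since $\tau$ is an anti-automorphism, $\tau(z_s^{\delta}w)=\tau(w)\tau(z_s^{\delta})$, and I would compute $\tau(z_s^{\delta})$ explicitly using $\tau(x)=y_1$, $\tau(y_1)=x$, $\tau(y_s)=-y_s$; the point is that $\tau$ should permute the role of the prefix letter and the algebra $\A_1^1$ controlling the diamond product, converting a statement about left-multiplication by $z_s^{\delta}$ into a statement about the harmonic/diamond structure after conjugation. The key intermediate step is to reconcile $\tau$ with the map $\psi_s$ and with $\varphi$, $\mapI$, $M_s$ appearing in \eqref{eq-1}: I expect an identity of the shape $\tau\psi_s=\psi_s\,(\text{something})$ or a compatibility between $\tau$ and the harmonic product $\ast$, so that conjugating by $\tau$ corresponds to the antipode $S$ at the level of the tree-indexed data.

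\medskip

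Next I would feed $\widetilde{f}$ itself (via Theorem \ref{main1}, $\widetilde{f}(z_s^{\delta}w)=z_s^{\delta}(F_f\diamond_s w)$) into the conjugation, carefully tracking how $\tau$ acts before and after applying $\widetilde{f}$. Writing everything through \eqref{eq-1} in terms of $\ast$, the identity I am aiming for becomes a statement that $\tau$ intertwines the harmonic product with its $\tau$-twisted version, combined with the standard fact that the antipode $S$ on $\calH$ is the appropriate sign-reversing, order-reversing operation on forests. At this point I expect $\varphi(G_f)$ to emerge as a $\tau$-transform of $\varphi(F_f)$, and invoking uniqueness from Theorem \ref{main2} closes the argument.

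\medskip

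The main obstacle, I expect, is the bookkeeping in the third step: precisely establishing the commutation relations between $\tau$ and the cluster of maps $\varphi,\mapI,M_s,\psi_s$, and verifying that $\tau$ turns the harmonic product $\ast$ into exactly the twisted product that appears after applying the antipode. In the $r=1$ case this is \cite[Theorem 1.5]{MT22}, so the genuinely new difficulty is handling the roots of unity $s\in\mu_r$ and the sign $\tau(y_s)=-y_s$ uniformly; I would isolate this as a lemma on $\tau\psi_s$ and then let the rest follow formally from Theorems \ref{main1} and \ref{main2} and the uniqueness of $F_f$ and $G_f$.
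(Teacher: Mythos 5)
Your overall strategy --- express both sides through Theorems \ref{main1} and \ref{main2}, reduce to a relation between $F_f$ and $G_f$ together with a $\tau$-compatibility of the diamond product, and close with uniqueness --- is indeed the skeleton of the paper's argument. But as written the proposal has a genuine gap: the two identities you defer to as ``expected'' are precisely the hard content, and the form you expect them to take is not the form in which they hold. First, after applying $\tau$ the prefix $z_s^{\delta}$ becomes a suffix: $\tau(z_s^{\delta}w)=\tau(w)\tau(z_s^{\delta})=\tau(w)(z-z_s^{\delta})$, so Theorem \ref{main1} cannot be applied to $\widetilde{f}(\tau(z_s^{\delta}w))$ for arbitrary $w$. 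One is forced to decompose: the paper first treats $w\in z_s^{\delta}\A_r(z-z_t^{\delta})$, where $\tau(w)=z_t^{\delta}\tau(w')(z-z_s^{\delta})$ begins with $z_t^{\delta}$ and Theorem \ref{main1} applies \emph{with the root of unity $t$, not $s$}; the remaining cases $w\in\A_r z$ and $w\in z\A_r$ are handled separately using the commutation of $R_z$ and $L_z$ with RTMs. Your plan of producing $\tau\widetilde{f}\tau(z_s^{\delta}w)=z_s^{\delta}(H\diamond_s w)$ for all $w$ in one stroke does not get off the ground without this case split.

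Second, the compatibility of $\tau$ with the diamond product is not a clean intertwining $\tau\psi_s=\psi_s(\cdot)$ or a simple twisting of $\ast$; the identity actually needed (Proposition \ref{prop:main3}) is
\[
 z_s^{\delta}\bigl(v y \diamond_s w(z-z_t^{\delta})\bigr)=-\tau \bigl(\tau(v)y\diamond_t \tau(w)(z-z_s^{\delta})\bigr)(z-z_t^{\delta}),
\]
in which the two roots of unity $s$ and $t$ are exchanged between the two sides. The paper proves this by a lengthy case-by-case induction directly on the recursion \eqref{diamond} (supported by the seven-part Lemma \ref{lem2}), not via the harmonic-product formula \eqref{eq-1}. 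Finally, the relation between $F_f$ and $G_f$ that makes the two sides match is $F_f=-R_y\tau R_y^{-1}(F_{S(f)})$ (Lemma \ref{MT:prop_5.1}, imported from \cite{MT22}), again not a consequence of a commutation of $\tau$ with $\psi_s$. Without these three concrete inputs --- the case decomposition, Proposition \ref{prop:main3}, and Lemma \ref{MT:prop_5.1} --- the proposal remains a plausible outline rather than a proof.
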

\noindent Hence, for any $s\in\mu_r$. Hence, for $s\in\mu_r, w\in\A_{r,+}^1$, and $f\in\Aug(\calH)$, we have
\[
\tau \widetilde{f}\tau(z_s^{\delta}w) = \widetilde{S(f)}(z_s^{\delta}w) = z_s^{\delta}(G_f \diamond_s w) = z_s^{\delta}\cdot \psi_s(\varphi(G_f) \ast \psi_s^{-1}(w)) \in\ker\mapL 
\]
because of \eqref{eq-1}, \eqref{eq0}, and the last two theorems. Thus again we have the following proved first in \cite[Theorem 2.4]{TW22}: 
\begin{cor}
 For $f\in\Aug(\calH)$,
 we have $\tau\widetilde{f}\tau(\A_{r,+}^{0}) \subset\ker\mapL$.
\end{cor}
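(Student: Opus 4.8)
The plan is to deduce the corollary from the four facts collected just above it, after reducing $\A_{r,+}^0$ to a convenient spanning set. Since $\tau\widetilde{f}\tau$ and $\mapL$ are $\Q$-linear, it suffices to check the inclusion on spanning elements, so I would first establish
\[
\A_{r,+}^0\subseteq\sspan_{\Q}\bigl\{\,z_s^{\delta}w : s\in\mu_r,\ w\in\A_{r,+}^1\,\bigr\}.
\]
Indeed, a word in $x\A_r y_s$ equals $z_1^{\delta}\cdot(u y_s)$ since $z_1^{\delta}=x$, with $u y_s\in\A_{r,+}^1$; and a word $y_t u y_s$ in $y_t\A_r y_s$ with $t\neq1$ is rewritten through $y_t=z_t^{\delta}-x$ as $z_t^{\delta}(u y_s)-z_1^{\delta}(u y_s)$, again a combination of the allowed generators with tails in $\A_{r,+}^1=\bigoplus_{s}\A_r y_s$. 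Thus it is enough to prove $\tau\widetilde{f}\tau(z_s^{\delta}w)\in\ker\mapL$ for $s\in\mu_r$ and $w\in\A_{r,+}^1$.

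On such a generator I would run the displayed chain
\[
\tau\widetilde{f}\tau(z_s^{\delta}w)=\widetilde{S(f)}(z_s^{\delta}w)=z_s^{\delta}(G_f\diamond_s w)=z_s^{\delta}\cdot\psi_s\bigl(\varphi(G_f)\ast\psi_s^{-1}(w)\bigr),
\]
using Theorem \ref{main3} for the first equality, Theorem \ref{main2} for the second, and \eqref{eq-1} for the third. To apply \eqref{eq0} I must verify two membership statements: $\psi_s^{-1}(w)\in\A_{r,+}^1$ and $\varphi(G_f)\in\A_{1,+}^1$. For the first I would check that $M_s$, $\mapI$, and $\varphi$ each map $\A_{r,+}^1$ bijectively onto itself: $M_s$ and $\mapI$ only relabel the subscripts of a word and fix its trailing $x$-part, so they send a word ending in some $y_{s_l}$ to another such word; and $\varphi(u y_s)$ equals $-\varphi(u)y_1$ if $s=1$ and $\varphi(u)(y_s-y_1)$ otherwise, both in $\A_{r,+}^1$, with $\varphi$ an involution. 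Hence $\psi_s=\varphi\mapI M_s$ preserves $\A_{r,+}^1$, giving $\psi_s^{-1}(w)\in\A_{r,+}^1$; and once $\varphi(G_f)\in\A_{1,+}^1$ is known, $\varphi(G_f)\ast\psi_s^{-1}(w)\in\A_{1,+}^1\ast\A_{r,+}^1$, so \eqref{eq0} places the whole expression in $\ker\mapL$, and linearity completes the argument.

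The step requiring genuine care, and the main obstacle, is the claim $G_f\in\A_{1,+}^1$ — that $G_f$ carries no constant term — for $f\in\Aug(\calH)$; a surviving scalar $c$ would produce the term $c\,z_s^{\delta}w$, i.e.\ the original admissible word, which need not lie in $\ker\mapL$. By the uniqueness in Theorems \ref{main1} and \ref{main2} one has $G_f=F_{S(f)}$, and the antipode preserves the augmentation ideal, so $S(f)\in\Aug(\calH)$; thus it suffices to show $F_g\in\A_{1,+}^1$ for every $g\in\Aug(\calH)$. Because $f\mapsto F_f$ is linear and $\Aug(\calH)$ is spanned by nonempty forests, I may take $g$ to be a single nonempty forest, and here I would invoke the weight-raising property of RTMs: $\widetilde{1}=\id$ gives $F_1=1$ and $1\diamond_s w=w$, while the RTM of a nonempty forest strictly raises $\wt$. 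Comparing the weight-$\wt(z_s^{\delta}w)$ parts of $\widetilde{g}(z_s^{\delta}w)=z_s^{\delta}(F_g\diamond_s w)$ — the left side has no component in that weight, whereas a constant term $c$ of $F_g$ would contribute exactly $c\,z_s^{\delta}w$ there — forces $c=0$. Hence $F_g\in\A_{1,+}^1$, so $G_f\in\A_{1,+}^1$ and $\varphi(G_f)\in\A_{1,+}^1$, which closes the last gap.
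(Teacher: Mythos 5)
Your proposal is correct and takes essentially the same route as the paper, whose proof of this corollary is exactly the chain $\tau\widetilde{f}\tau(z_s^{\delta}w)=\widetilde{S(f)}(z_s^{\delta}w)=z_s^{\delta}(G_f\diamond_s w)=z_s^{\delta}\cdot\psi_s(\varphi(G_f)\ast\psi_s^{-1}(w))\in\ker\mapL$ justified by Theorems \ref{main2} and \ref{main3} together with \eqref{eq-1} and \eqref{eq0}. The additional verifications you supply (the spanning reduction of $\A_{r,+}^{0}$ to elements $z_s^{\delta}w$ with $w\in\A_{r,+}^1$, the stability of $\A_{r,+}^1$ under $\varphi$, $\mapI$, $M_s$ and hence under $\psi_s^{\pm1}$, and the positivity $G_f\in\A_{1,+}^1$ for $f\in\Aug(\calH)$) are routine details the paper leaves implicit; note only that your weight-comparison detour for the last point can be shortcut, since $G_f\in\A_{1,+}^1$ is immediate from the recursion $G_{\fo}=-y$, $G_{B_+(g)}=L_{2x+y}(G_g)$, $G_{gh}=G_g\diamond_1 G_h$ together with \eqref{eq-1}.
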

%

\section{Connes-Kreimer Hopf algebra of rooted trees}\label{ck}
In this section,  we briefly review the Connes-Kreimer Hopf algebra of rooted trees appeared in \cite{CK98}. A rooted tree is a finite, connected, acyclic, and oriented graph with a special vertex called the root from which every edge directly or indirectly originates. A rooted forest is a product (disjoint union) of rooted trees. The empty tree, the rooted tree of degree $0$, denoted by $\I$ is the neutral element for the product. We put $\calT$ the $\Q$-vector space freely generated by rooted trees. 

As is mentioned in the previous section, we denote by $\calH$ the $\Q$-algebra generated by rooted trees. As a vector space, $\calH$ is freely generated by rooted forests. The $\Q$-linear map called the grafting operator $B_+:\calH\to\calT$ is defined by $B_+(\I)=\,\begin{xy}{(0,0)\ar@{{*}-{*}}(0,0)}\end{xy}\,\,$  and, for a rooted forest $f$ of positive degree, all the roots of connected components of $f$ are grafted to a single new vertex, which becomes the new root. For example, we have
\[
B_{+}\left(
\,\,\begin{xy}
   {(-2.5,0) \ar @{{*}-{*}} (-2.5,0)}, 
   {(0,-1.5) \ar @{{*}-{*}} (2,1.5)}, 
   {(2,1.5) \ar @{{*}-{*}} (4,-1.5)},
\end{xy}\,\,
\right)
=
\,\begin{xy}
   {(0,-3) \ar @{{*}-{*}} (2,0)}, 
   {(2,0) \ar @{{*}-{*}} (4,-3)},
   {(2,0) \ar @{{*}-{*}} (0,3)},
   {(-2,0) \ar @{{*}-{*}} (0,3)}, 
\end{xy}\,\;,
\quad
B_{+}\left(
\,\,\begin{xy}
   {(-2.5,0) \ar @{{*}-{*}} (-2.5,0)}, 
   {(0,0) \ar @{{*}-{*}} (0,0)}, 
   {(2.5,0) \ar @{{*}-{*}} (2.5,0)},
\end{xy}\,\,
-2
\;\;\begin{xy}
   {(-3,-1.5) \ar @{{*}-{*}} (-3,1.5)}, 
   {(0,-1.5) \ar @{{*}-{*}} (0,1.5)}, 
\end{xy}\,\,
\right)
=
\,\,\begin{xy}
   {(-2.5,-1.5) \ar @{{*}-{*}} (0,1.5)}, 
   {(0,-1.5) \ar @{{*}-{*}} (0,1.5)}, 
   {(2.5,-1.5) \ar @{{*}-{*}} (0,1.5)},
\end{xy}\,\,
-2
\;\;\begin{xy}
   {(-3,-3) \ar @{{*}-{*}} (-3,0)}, 
   {(0,-3) \ar @{{*}-{*}} (0,0)}, 
   {(-3,0) \ar @{{*}-{*}} (-1.5,3)}, 
   {(0,0) \ar @{{*}-{*}} (-1.5,3)}, 
\end{xy}\;\,.
\]
In particular, the map $B_+$ increases the degree of the graph by $1$. 

We define the coproduct $\Delta$ on $\calH$ recursively by multiplicativity and 
\begin{align}\label{Hoch}
\Delta(t)=t \otimes \mathbb{I} + (\mathbb{I} \otimes B_+)\Delta (f)
\end{align}
for $t=B_+(f)$. In words of Hochschild cohomology of bialgebras, the grafting operator $B_+$ satisfies the Hochschild $1$-cocycle condition. For example, we have
\begin{align*}
 \Delta\left(\mathbb{I}\right)
 &=\mathbb{I} \otimes \mathbb{I},\\
 \Delta\left(
 \,\,\begin{xy}
   {(0,0) \ar @{{*}-{*}} (0,0)},
 \end{xy}\,\,
 \right)
 &=\,\begin{xy}
   {(0,0) \ar @{{*}-{*}} (0,0)},
 \end{xy}\,\,
 \otimes \mathbb{I} + \mathbb{I} \otimes 
 \,\begin{xy}
   {(0,0) \ar @{{*}-{*}} (0,0)},
 \end{xy}\,\,, \\
 \Delta\left(
 \,\,\begin{xy}
   {(0,0) \ar @{{*}-{*}} (0,0)}, 
   {(3,0) \ar @{{*}-{*}} (3,0)}, 
 \end{xy}\,\,
 \right)
 &=\,\,\begin{xy}
   {(0,0) \ar @{{*}-{*}} (0,0)}, 
   {(3,0) \ar @{{*}-{*}} (3,0)}, 
 \end{xy}\,\,
 \otimes \mathbb{I} 
 +2\,\,\,\begin{xy}
   {(0,0) \ar @{{*}-{*}} (0,0)},
 \end{xy}\,\,
 \otimes
 \,\begin{xy}
   {(0,0) \ar @{{*}-{*}} (0,0)},
 \end{xy}\,\,
 + \mathbb{I} \otimes 
  \,\,\begin{xy}
   {(0,0) \ar @{{*}-{*}} (0,0)}, 
   {(3,0) \ar @{{*}-{*}} (3,0)}, 
 \end{xy}\,\,, \\
 \Delta\left(
 \,\,\begin{xy}
   {(0,-1.5) \ar @{{*}-{*}} (0,1.5)}, 
 \end{xy}\,\,
 \right)
 &=\,\begin{xy}
   {(0,-1.5) \ar @{{*}-{*}} (0,1.5)}, 
 \end{xy}\,\,
 \otimes \mathbb{I} 
 +\,\begin{xy}
   {(0,0) \ar @{{*}-{*}} (0,0)},
 \end{xy}\,\,
 \otimes
 \,\begin{xy}
   {(0,0) \ar @{{*}-{*}} (0,0)},
 \end{xy}\,\,
 + \mathbb{I} \otimes 
 \,\begin{xy}
   {(0,-1.5) \ar @{{*}-{*}} (0,1.5)}, 
 \end{xy}\,\,, \\
 \Delta\left(
 \,\,\begin{xy}
   {(0,-1.5) \ar @{{*}-{*}} (2,1.5)}, 
   {(2,1.5) \ar @{{*}-{*}} (4,-1.5)},
 \end{xy}\,\,
 \right)
 &= \,\begin{xy}
   {(0,-1.5) \ar @{{*}-{*}} (2,1.5)}, 
   {(2,1.5) \ar @{{*}-{*}} (4,-1.5)},
 \end{xy}\,\,
 \otimes \mathbb{I}
 +\,\,\begin{xy}
   {(0,0) \ar @{{*}-{*}} (0,0)}, 
   {(3,0) \ar @{{*}-{*}} (3,0)}, 
 \end{xy}\,\,
 \otimes
 \,\begin{xy}
   {(0,0) \ar @{{*}-{*}} (0,0)},
 \end{xy}\,\,
 +2 \,\,\,\begin{xy}
   {(0,0) \ar @{{*}-{*}} (0,0)},
 \end{xy}\,\,
 \otimes
 \,\begin{xy}
   {(0,-1.5) \ar @{{*}-{*}} (0,1.5)}, 
 \end{xy}\,\,
 +\mathbb{I} \otimes 
  \,\begin{xy}
   {(0,-1.5) \ar @{{*}-{*}} (2,1.5)}, 
   {(2,1.5) \ar @{{*}-{*}} (4,-1.5)},
 \end{xy}\,\,.
\end{align*}
It is known that the coproduct $\Delta$ is coassociative but not cocommutative. 

The counit $\hat{\I}:\calH\to\Q$ is defined by vanishing on $\Aug(\calH)$ and $\hat{\I}(\I)=1$. If we denote the product by $m:\calH\otimes\calH\to\calH$, we define the antipode $S$ by the anti-automorphism on $\calH$ satisfying
\[
m\circ (S\otimes id)\circ\Delta = \I\circ\hat{\I} = m\circ (id \otimes S)\circ\Delta . 
\]
Then the tuple $(\calH,m,\I,\Delta,\hat{\I},S)$ forms a Hopf algebra known as the Connes-Kreimer Hopf algebra of rooted trees.

\section{Rooted tree maps}\label{rtm}
In this section, we introduce rooted tree maps developed in \cite{TW22}. Let the identity map on $\A_r$ be assigned to the empty tree $\I$, i.e., $\widetilde{\I}=id$. For any rooted forest $f$ of positive degree, we define the $\Q$-linear map $\widetilde{f}:\A_r\to\A_r$ by the following four conditions. 
\begin{itemize}
 \item[(\,I\,)] 
 If $f=\,\begin{xy}{(0,0)\ar@{{*}-{*}}(0,0)}\end{xy}\,\,$, 
 $\widetilde{f}(z^{\delta}_s)=z^{\delta}_s(z-z^{\delta}_s)$ and $\widetilde{f}(z)=0$, 
 \item[(I\hspace{-.01em}I)] 
 $\widetilde{B_+(f)}(z_s^{\delta})
 =R_{z-z^{\delta}_s}R_{2z-z^{\delta}_s}R^{-1}_{z-z^{\delta}_s}\widetilde{f}(z_s^{\delta})$ and $\widetilde{B_+(f)}(z)=0$, 
 \item[(I\hspace{-.15em}I\hspace{-.15em}I)]
 If $f=gh$, $\widetilde{f}(v)=\widetilde{g}(\widetilde{h}(v))$ for $v\in\{z,z_s^{\delta} | s\in\mu_r\}$, 
 \item[(I\hspace{-.15em}V\hspace{-.06em})] 
 $\widetilde{f}(wv)=M(\widetilde{\Delta(f)}(w\otimes v))$ for $w\in\A_r,v\in\{z,z_s^{\delta} | s\in\mu_r\}$, 
\end{itemize}
where $s\in\mu_r$, $R_w$ denotes the right multiplication map by $w$, i.e., $R_w(v)=vw$ for $v,w\in\A_r$, $M:\A_r\otimes\A_r\to\A_r$ denotes the concatenation product, and $\widetilde{\Delta (f)}=\sum_{(f)}\widetilde{a}\otimes \widetilde{b}$ when $\Delta (f)=\sum_{(f)}a\otimes b$. As a matter of fact, the assignment $\widetilde{\ }:\calH\to\End_{\Q}(\A_r)$ is an algebra homomorphism. We find that $\widetilde{f}(z_s^{\delta})$ always ends with $z-z^{\delta}_s$ and hence the condition (I\hspace{-.01em}I) is well-defined. We also find that the image $\widetilde{f}(v)$ in the condition (I\hspace{-.15em}I\hspace{-.15em}I) does not depend on how to decompose $f$ into $g$ and $h$. In fact, the conditions (I\hspace{-.15em}I\hspace{-.15em}I) and (I\hspace{-.15em}V\hspace{-.06em}) hold for any $v\in\A_r$. We call $\widetilde{f}$ the RTM assigned to $f\in\calH$. The commutativity of RTMs is nontrivial but can be proved. 
\begin{ex}[calculations of images of RTMs]
 Since $
  \widetilde{\,\,\begin{xy}
   {(0,0) \ar @{{*}-{*}} (0,0)}, 
  \end{xy}\,\,}
  (z_s^{\delta})
  =z_s^{\delta}(z-z_s^{\delta})
 $
 and
 $
 \Delta\left(
 \,\,\begin{xy}
   {(0,0) \ar @{{*}-{*}} (0,0)},
 \end{xy}\,\,
 \right)
 =\,\begin{xy}
   {(0,0) \ar @{{*}-{*}} (0,0)},
 \end{xy}\,\,
 \otimes \mathbb{I} + \mathbb{I} \otimes 
 \,\begin{xy}
   {(0,0) \ar @{{*}-{*}} (0,0)},
 \end{xy}\,\, 
 $, we have
 \begin{align*}
 \widetilde{\,\,\begin{xy}
   {(0,0) \ar @{{*}-{*}} (0,0)}, 
   {(3,0) \ar @{{*}-{*}} (3,0)}, 
 \end{xy}\,\,}\,(z_s^{\delta})
 =\widetilde{\,\,\begin{xy}
   {(0,0) \ar @{{*}-{*}} (0,0)}, 
 \end{xy}\,\,}\,(z_s^{\delta}(z-z_s^{\delta}))
 =\widetilde{\,\,\begin{xy}
   {(0,0) \ar @{{*}-{*}} (0,0)},
 \end{xy}\,\,}\,(z_s^{\delta})(z-z_s^{\delta})
 + z_s^{\delta} 
 \widetilde{\,\,\begin{xy}
   {(0,0) \ar @{{*}-{*}} (0,0)},
 \end{xy}\,\,}\,(z-z_s^{\delta})
 =z_s^{\delta}(z-z_s^{\delta})^2+(z_s^{\delta})^2(z-z_s^{\delta}).
 \end{align*}
 %
 %
 Then we calculate
 \begin{align*}
  \widetilde{\,\,\begin{xy}
   {(0,-1.6) \ar @{{*}-{*}} (2,1.4)}, 
   {(2,1.4) \ar @{{*}-{*}} (4,-1.6)},
 \end{xy}\,\,}\,(z_s^{\delta})
 &=\widetilde{B_{+}\left(
 \,\,\begin{xy}
   {(0,0) \ar @{{*}-{*}} (0,0)}, 
   {(3,0) \ar @{{*}-{*}} (3,0)}, 
 \end{xy}\,\, 
 \right)}(z_s^{\delta})
 =R_{z-z_s^{\delta}}R_{2z-z_s^{\delta}}R^{-1}_{z-z_s^{\delta}}
  \widetilde{\,\,\begin{xy}
   {(0,0) \ar @{{*}-{*}} (0,0)}, 
   {(3,0) \ar @{{*}-{*}} (3,0)}, 
 \end{xy}\,\,}\,(z_s^{\delta}) \\
 &=z_s^{\delta}(z-z_s^{\delta})(2z-z_s^{\delta})(z-z_s^{\delta})+(z_s^{\delta})^2(2z-z_s^{\delta})(z-z_s^{\delta}).
 \end{align*}
\end{ex}

\section{Harmonic product and Diamond product}\label{har}
The harmonic product $\ast:\A_r\times\A_r\to\A_r$ is defined by $\Q$-bilinearity and
\begin{itemize}
\item[(\,I\,)] $1\ast w=w\ast 1=w$,
\item[(I\hspace{-.01em}I)] $vy_s\ast wy_t=(v\ast wy_t)y_s+(vy_s\ast w)y_t+(v\ast w)xy_{st}$, 
\item[(I\hspace{-.15em}I\hspace{-.15em}I)] $vx\ast w=v\ast wx=(v\ast w)x$,
\end{itemize}
for $v,w\in\A_r$, $s,t\in\mu_r$. It is associative and commutative. The tuples $(\A_r^1,\ast)$ and $(\A_r^0,\ast)$ are subalgebras of $(\A_r,\ast)$. Note that the composition $\mapL\,\mapI$ is known as the evaluation map of MLVs of harmonic type and hence an algebra homomorphism with respect to $\ast$ (see \cite{AK04}). 
\begin{lem}
For $k,l\geq 1, s,t\in\mu_r$, and $v,w\in\A_r$, we have
\begin{itemize}
\item[(i)] $v z_{k,s}\ast w z_{l,t} = (v\ast w z_{l,t})z_{k,s}+(v z_{k,s} \ast w)z_{l,t}+(v\ast w)z_{k+l,st},$
\item[(ii)] $z_{k,s} v\ast z_{l,t} w = z_{k,s} (v\ast z_{l,t} w)+z_{l,t} (z_{k,s} v\ast w)+z_{k+l,st} (v\ast w).$
\end{itemize}
\end{lem}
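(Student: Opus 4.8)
The plan is to deduce both identities directly from the defining rules (I)--(III) of the harmonic product $\ast$: statement (i) is a ``block'' form of rule (II), obtained by a single finite computation, while statement (ii) is a left-sided recursion that I will bootstrap by induction on the weight.

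For (i), I use $z_{k,s}=x^{k-1}y_s$ and $z_{l,t}=x^{l-1}y_t$ to write $vz_{k,s}=(vx^{k-1})y_s$ and $wz_{l,t}=(wx^{l-1})y_t$, and apply rule (II) to the terminal letters $y_s,y_t$. This yields three terms, and to simplify them I first record the auxiliary identity $ux^a\ast u'x^b=(u\ast u')x^{a+b}$ ($u,u'\in\A_r$, $a,b\ge0$), which follows by iterating rule (III). Pushing the $x$-powers outward through each of the three terms by this identity and reattaching the trailing $y$'s, one checks that the terms become precisely $(v\ast wz_{l,t})z_{k,s}$, $(vz_{k,s}\ast w)z_{l,t}$, and $(v\ast w)z_{k+l,st}$. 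This step is pure bookkeeping.

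For (ii) the obstacle is structural: the rules (II)--(III) peel letters from the \emph{right}, whereas (ii) recurses on the \emph{leftmost} blocks, and no naive left analogue of rule (III) is available (for example $x\ast y_s=y_sx\neq xy_s$), so the identity genuinely lives at the level of blocks $z_{k,s}$ rather than single letters. I therefore argue by induction on $\wt(v)+\wt(w)$, reducing to words by bilinearity. The base case $v=w=1$ is (i) specialised to $v=w=1$. In the inductive step, if $v$ or $w$ ends in $x$ I peel that $x$ with rule (III), apply the induction hypothesis, and reinsert the $x$ with rule (III), which returns the claimed identity at once. If instead both $z_{k,s}v$ and $z_{l,t}w$ terminate in a letter $y$, I apply rule (II) once to the two terminal $y$'s and then apply the induction hypothesis to each of the three shorter products that appear.

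The heart of the matter is this last case. After the single use of rule (II) and the three uses of the induction hypothesis one obtains nine terms, which must be sorted by their leading block ($z_{k,s}$, $z_{l,t}$, or $z_{k+l,st}$) and, within each group, recombined by \emph{re-applying} rule (II) so that the three groups collapse exactly to $z_{k,s}(v\ast z_{l,t}w)$, $z_{l,t}(z_{k,s}v\ast w)$, and $z_{k+l,st}(v\ast w)$. When $v$ and $w$ are both nonempty this regrouping is clean. The delicate point I expect to be the main difficulty is the degenerate subcase $v=1$ (or $w=1$): there the terminal $y$ is supplied by the block $z_{k,s}$ itself, so stripping it exposes a bare power $x^{k-1}$ instead of a block and the induction hypothesis does not apply verbatim. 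I plan to close this subcase by a short direct computation, evaluating the exposed power with rule (III) and identifying the remaining terms by invoking part (i) with its second distinguished block taken to be a single letter $z_{1,q}=y_q$; this produces exactly the combination needed. With the degenerate subcase settled, the induction closes and both (i) and (ii) follow.
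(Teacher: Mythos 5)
Your proposal is correct and follows essentially the same route as the paper: (i) by direct reduction to rules (I\hspace{-.01em}I) and (I\hspace{-.15em}I\hspace{-.15em}I), and (ii) by induction on total degree, peeling the terminal blocks, applying the inductive hypothesis to the three resulting products, and regrouping via a second application of (i). The only real difference is that the paper peels whole blocks $z_{m,a}$, $z_{n,b}$ by invoking part (i) itself rather than rule (I\hspace{-.01em}I) on single letters, which sidesteps the letter-level degenerate subcase ($v=1$ or $w=1$) you flag; your planned treatment of that subcase via rule (I\hspace{-.15em}I\hspace{-.15em}I) and part (i) does close it correctly.
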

\begin{proof}
Because of the condition (I\hspace{-.15em}I\hspace{-.15em}I), it is enough to show when $v,w\in\A_r^1$. 

To show (i), substitute $vx^{k-1}$ and $wx^{l-1}$ into $v$ and $w$, respectively, in the condition (I\hspace{-.01em}I) and then use the condition (I\hspace{-.15em}I\hspace{-.15em}I). 

We show (ii) by induction on total degree of words. If $v=w=1$, it follows from (i) for $v=w=1$. 

If $v=v'z_{m,a}\ (v'\in\A_r^1,m\geq 1,a\in\mu_r)$ and $w=1$, the left-hand side equals
\begin{align}\label{eq6}
(z_{k,s}v'\ast z_{l,t})z_{m,a}+z_{k,s}v z_{l,t}+z_{k,s}v'z_{l+m,ta}
\end{align}
because of (i). The 1st term turns into
\[
\{z_{k,s}(v'\ast z_{l,t})+z_{l,t}z_{k,s}v'+z_{k+l,st}v'\}z_{m,a}
\]
by induction, and hence we have
\[
\eqref{eq6} = z_{k,s}\{(v'\ast z_{l,t})z_{m,a}+vz_{l,t}+v'z_{l+m,ta}\}+z_{l,t}z_{k,s}v+z_{k+l,st}v.
\]
Again by (i), we see that this coincides with the right-hand side. The proof goes similarly if $v=1$ and $w=w'z_{n,b}\ (w'\in\A_r,n\geq 1,b\in\mu_r)$. 

If $v=v'z_{m,a}$ and $w=w'z_{n,b}$, the left-hand side equals
\[
(z_{k,s}v'\ast z_{l,t}w)z_{m,a}+(z_{k,s}v\ast z_{l,t}w')z_{n,b}+(z_{k,s}v'\ast z_{l,t}w')z_{m+n,ab}
\]
because of (i). This turns into
\begin{align*}
& \{z_{k,s}(v'\ast z_{l,t}w)+z_{l,t}(z_{k,s}v'\ast w)+z_{k+l,st}(v'\ast w)\}z_{m,a} \\
&+ \{z_{k,s}(v\ast z_{l,t}w')+z_{l,t}(z_{k,s}v\ast w')+z_{k+l,st}(v\ast w')\}z_{n,b} \\
&+ \{z_{k,s}(v'\ast z_{l,t}w')+z_{l,t}(z_{k,s}v'\ast w')+z_{k+l,st}(v'\ast w')\}z_{m+n,ab}
\end{align*}
by induction. Again by (i), we see that this coincides with the right-hand side. 
\end{proof}
From now on, let $y=y_{1}$ for simplicity. For $s\in\mu_{r}$, we define the $\Q$-bilinear map $\diamond_s:\A_1\times\A_r\to\A_r$ by 
\begin{align} \label{diamond}
 \begin{split}
  1\diamond_{s}w & =w,\\
  v\diamond_{s}1 & =\psi_s\varphi(v),\\
  vx\diamond_{s}wx & =(v\diamond_{s}wx)x-(vy\diamond_{s}w)x,\\
  vy\diamond_{s}wx & =(v\diamond_{s}wx)y+(vy\diamond_{s}w)x,\\
  vx\diamond_{s}wy & =(v\diamond_{s}wy)x+(vx\diamond_{s}w)y,\\
  vy\diamond_{s}wy & =(v\diamond_{s}wy)y-(vx\diamond_{s}w)y,\\
  vx\diamond_{s}wy_{t} & =(v\diamond_{s}wy_{t})x+(v\diamond_{s}wz_{t})y_{t}-(vy\diamond_{s}w)y_{t},\\
  vy\diamond_{s}wy_{t} 
  &=(v\diamond_{s}wy_{t})y-(v\diamond_{s}wz_{t})y_{t}+(vy\diamond_{s}w)y_{t},
 \end{split} 
\end{align}
for $v\in\mathcal{A}_{1},w\in\mathcal{A}_{r}$ and $1\neq t\in\mu_r$. 
When $r=1$, 
the product $\diamond_{1}$ corresponds to the one defined in \cite{HMO19} and is commutative. Note that, in general, $1$ is the left unit but not the right unit. For example, one checks $y\diamond_{s}1=z-z_{s}^{\delta}$. 
\begin{lem} \label{lem:x+y}
For $s\in\mu_{r}$, $v\in\A_{1}$, and $w\in\A_{r}$, we have
\[
v z\diamond_{s}w=v \diamond_{s}wz=(v \diamond_{s}w)z.
\]
\end{lem}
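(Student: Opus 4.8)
The plan is to prove the two asserted equalities separately: first the ``left'' identity $vz\diamond_s w=(v\diamond_s w)z$, and then the ``right'' identity $v\diamond_s wz=(v\diamond_s w)z$. Throughout I use $z=x+y$ together with the $\Q$-bilinearity of $\diamond_s$, so that $vz\diamond_s w=vx\diamond_s w+vy\diamond_s w$ and $v\diamond_s wz=v\diamond_s wx+v\diamond_s wy$, and I read off each summand from the defining relations \eqref{diamond}. The guiding observation is that the $x$- and $y$-contributions produced by those relations recombine into a single trailing $z$, while the genuinely mixed terms cancel in pairs.

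For the left identity, when $w$ is nonempty I would argue directly, with no induction. Grouping $w$ according to its last letter and adding the two relevant lines of \eqref{diamond} makes the cross terms cancel; for instance, for a word ending in $x$,
\begin{align*}
vz\diamond_s wx
&=\bigl((v\diamond_s wx)x-(vy\diamond_s w)x\bigr)+\bigl((v\diamond_s wx)y+(vy\diamond_s w)x\bigr)\\
&=(v\diamond_s wx)(x+y)=(v\diamond_s wx)z,
\end{align*}
and the cases of a word ending in $y$ or in $y_t$ ($t\neq1$) are entirely analogous; in the $y_t$ case the two $(v\diamond_s wz_t)y_t$ terms cancel in addition to the $(vy\diamond_s w)y_t$ terms. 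The only remaining case is $w=1$, and this is where the auxiliary maps must be used. Here $vz\diamond_s 1=\psi_s\varphi(vz)$ while $(v\diamond_s 1)z=\psi_s\varphi(v)\cdot z$, so I must show $\psi_s\varphi(vz)=\psi_s\varphi(v)\cdot z$. Since $\varphi$ is an algebra homomorphism with $\varphi(x)=z$ and hence $\varphi(z)=\varphi(x+y)=x$, we get $\varphi(vz)=\varphi(v)x$; and because $M_s$ and $\mapI$ commute with right multiplication by $x$ (they alter only the leading block of a word), while $\varphi$ turns a trailing $x$ back into a trailing $z$, peeling that $x$ through $\psi_s=\varphi\mapI M_s$ yields exactly $\psi_s\varphi(v)\cdot z$.

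For the right identity I would induct on the degree of $v$, whose cases are $v=1$, $v=v'x$, and $v=v'y$, since every word of $\A_1$ ends in $x$ or $y$. The base case is immediate: $1\diamond_s wz=wz=(1\diamond_s w)z$. For $v=v'x$ (the case $v=v'y$ being symmetric), I expand $v'x\diamond_s wz=v'x\diamond_s wx+v'x\diamond_s wy$ by \eqref{diamond}; bilinearity combines two of the four resulting terms into $(v'\diamond_s wz)x$, to which I apply the induction hypothesis $v'\diamond_s wz=(v'\diamond_s w)z$, and then the already-established left identity in the form $(v'\diamond_s w)z=v'z\diamond_s w=v'x\diamond_s w+v'y\diamond_s w$. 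The two surviving cross terms then cancel and leave $(v'x\diamond_s w)z$, as required. I expect the main obstacle to be precisely the $w=1$ case of the left identity, since that is the only point at which the internal structure of $\psi_s$, $\varphi$, $M_s$, and $\mapI$ is needed rather than the formal peeling relations; the key structural inputs are $\varphi(z)=x$ and the compatibility of $M_s$ and $\mapI$ with right multiplication by $x$. Everything else reduces to careful but routine cancellation in \eqref{diamond}, the one logical subtlety being that the left identity must be proven first and then reused inside the induction establishing the right identity.
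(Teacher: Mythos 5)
Your proposal is correct and follows essentially the same route as the paper: the identity $vz\diamond_s w=(v\diamond_s w)z$ is read off directly from the defining relations (the paper dismisses this as ``by definition''; you usefully spell out the $w=1$ case via $\varphi(z)=x$ and the compatibility of $\mapI$, $M_s$ with a trailing $x$), and $v\diamond_s wz=(v\diamond_s w)z$ is proved by induction on $\deg(v)$ with exactly the same cancellation of the cross terms after invoking the induction hypothesis together with the first identity. The only nitpick is that $\mapI$ alters all subscripts, not just the leading block, but the property you actually use --- commutation with right multiplication by $x$ --- is correct.
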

\begin{proof}
 By definition, we easily see $v z\diamond_{s}w=(v \diamond_{s}w)z$.
 
 We prove $v \diamond_{s}wz=(v \diamond_{s}w)z$ for words $v,w$ by induction on $d=\deg(v)$.
 It is obvious if $d=0$. Assume $d\ge 1$.
 If $v=v'x$, by definition (in particular, adding 3rd and 5th identities in \eqref{diamond}), we have
 \begin{align*}
  v'x\diamond_{s}wz
  &=(v'\diamond_{s}wz)x-(v'y\diamond_{s}w)x+(v'x\diamond_{s}w)y \\
  &=(v'\diamond_{s}wz)x-(v'z\diamond_{s}w)x+(v'x\diamond_{s}w)z.
 \end{align*}
 By the induction hypothesis, the first two terms cancel out, and hence we obtain the assertion. The proof goes similarly when $v=v'y$. 
\end{proof}
\begin{lem}\label{lem6}
For $s,t\in\mu_{r}$, $v\in\A_1$, and $w\in\A_{r}$, we have
 \begin{itemize}
 \item[(i)] $v x\diamond_{s}w z_{t}^{\delta}=(v \diamond_{s}w z_{t}^{\delta})z_{t}^{\delta}-(v y\diamond_{s}w)z_{t}^{\delta}$, 
 \item[(ii)] $v y\diamond_{s}w z_{t}^{\delta}=(v \diamond_{s}w z_{t}^{\delta})(y \diamond_{t} 1)+(v y\diamond_{s}w)z_{t}^{\delta}$.
 \end{itemize}
\end{lem}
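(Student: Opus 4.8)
The plan is to verify (i) and (ii) directly from the defining relations \eqref{diamond} of $\diamond_s$, splitting into the two cases $t=1$ and $t\neq1$ according to the shape of $z_t^{\delta}$. Throughout I will use the values $z_1^{\delta}=x$ and $z_t^{\delta}=x+y_t$ (for $t\neq1$), together with the identity $y\diamond_t 1=z-z_t^{\delta}$ recorded after \eqref{diamond}.

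Suppose first that $t=1$. Then $z_1^{\delta}=x$, so $w z_1^{\delta}=wx$, and the two claims are read off immediately from the third and fourth relations in \eqref{diamond}. For (i) the third relation gives $vx\diamond_s wx=(v\diamond_s wx)x-(vy\diamond_s w)x$, which is exactly the assertion. For (ii) the fourth relation gives $vy\diamond_s wx=(v\diamond_s wx)y+(vy\diamond_s w)x$; since $y\diamond_1 1=z-z_1^{\delta}=y$, this is precisely $(v\diamond_s w z_1^{\delta})(y\diamond_1 1)+(vy\diamond_s w)z_1^{\delta}$.

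Now suppose $t\neq1$, so that $z_t^{\delta}=x+y_t$ and $w z_t^{\delta}=wx+wy_t$. By bilinearity of $\diamond_s$ I would expand $vx\diamond_s w z_t^{\delta}=vx\diamond_s wx+vx\diamond_s wy_t$ and apply the third and seventh relations of \eqref{diamond}; similarly, for (ii) I expand $vy\diamond_s w z_t^{\delta}=vy\diamond_s wx+vy\diamond_s wy_t$ and apply the fourth and eighth relations. Note that the term $v\diamond_s w z_t$ occurring in the seventh and eighth relations is $v\diamond_s w z_t^{\delta}$, already in the form needed. After substituting, I collect the monomials by their last letter: for (i) the summands $(v\diamond_s wx)x$, $(v\diamond_s wy_t)x$ and $(v\diamond_s w z_t^{\delta})y_t$ recombine into $(v\diamond_s w z_t^{\delta})(x+y_t)=(v\diamond_s w z_t^{\delta})z_t^{\delta}$, while the summands $-(vy\diamond_s w)x$ and $-(vy\diamond_s w)y_t$ recombine into $-(vy\diamond_s w)(x+y_t)=-(vy\diamond_s w)z_t^{\delta}$, which gives (i). For (ii) the analogous collection yields $(v\diamond_s w z_t^{\delta})y-(v\diamond_s w z_t^{\delta})y_t+(vy\diamond_s w)z_t^{\delta}$, and since $y-y_t=z-z_t^{\delta}=y\diamond_t 1$, this is the claimed right-hand side.

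This argument requires no induction; it is a one-step unfolding of the defining relations followed by regrouping. The only point demanding care, and hence the main obstacle, is the bookkeeping in the case $t\neq1$: one must track precisely which $x$- and $y_t$-contributions belong to the factor $v\diamond_s w z_t^{\delta}$ and which to $vy\diamond_s w$, so that each set recombines into the single letter $z_t^{\delta}=x+y_t$, and one must recognize the combination $y-y_t$ appearing in (ii) as $y\diamond_t 1$.
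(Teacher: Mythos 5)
Your proof is correct and follows essentially the same route as the paper: part (i) is obtained exactly as in the paper by combining the third and seventh identities of \eqref{diamond} (the case $t=1$ being the third identity alone, since $z_1^{\delta}=x$). The only cosmetic difference is in (ii), where the paper deduces the identity from (i), Lemma \ref{lem:x+y}, and $y\diamond_t 1=z-z_t^{\delta}$ (writing $vy=vz-vx$), whereas you unfold the fourth and eighth identities of \eqref{diamond} directly and regroup; both are one-step computations.
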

\begin{proof}
 By the 3rd and the 7th identities in \eqref{diamond}, we have (i).  
 By (i), Lemma \ref{lem:x+y}, and $y\diamond_t 1=z-z_t^{\delta}$, we have (ii). 
\end{proof}
We put $z_{s}=x+y_{s}$ for simplicity (, and hence $z_1=z$). Note that $\varphi(z_s)=z_s^{\delta}$. 
\begin{prop}\label{prop0}
For $s\in\mu_r$, $v\in\A_1$, and $w\in\A_r$, we have
\[
v\diamond_{s}w=\psi_{s}(\varphi(v)\ast\psi_{s}^{-1}(w)).
\]
\end{prop}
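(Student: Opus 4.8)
The plan is to prove the identity by a uniqueness argument. Write $\Phi_s(v,w):=\psi_s(\varphi(v)\ast\psi_s^{-1}(w))$ for the right-hand side; this is a $\Q$-bilinear map $\A_1\times\A_r\to\A_r$, since $\varphi,\psi_s,\psi_s^{-1}$ are linear and $\ast$ is bilinear. The eight relations in \eqref{diamond} determine a bilinear map on pairs of words uniquely by induction on the total degree $\deg(v)+\deg(w)$: when both $v$ and $w$ are nonempty each relation expresses $\diamond_s$ of a pair in terms of pairs of strictly smaller total degree, and when $v=1$ or $w=1$ the two base relations apply. Hence it suffices to show that $\Phi_s$ satisfies the same eight relations, for then $\Phi_s=\diamond_s$ on words and therefore everywhere by bilinearity. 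The two base cases are immediate: from $\varphi(1)=1$ and $\psi_s(1)=1$ one gets $\Phi_s(1,w)=\psi_s(\psi_s^{-1}(w))=w$ and $\Phi_s(v,1)=\psi_s(\varphi(v)\ast1)=\psi_s\varphi(v)$.

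The engine for the six recursive relations is the intertwining identity $\psi_s(w'x)=\psi_s(w')z$ for all $w'\in\A_r$ (equivalently $\psi_s^{-1}(wz)=\psi_s^{-1}(w)x$): both $M_s$ and $\mapI$ commute with right multiplication by $x$, as they leave a trailing $x$ in the normal form $\indz_{\indk,\inds}x^a$ untouched, while $\varphi(ux)=\varphi(u)z$ because $\varphi$ is an algebra homomorphism with $\varphi(x)=z$. Combined with $\varphi(z)=x$ and the relation $ax\ast b=(a\ast b)x$ built into $\ast$, this shows that $\Phi_s$ is $z$-transparent in its left slot, $\Phi_s(vz,w)=\Phi_s(v,w)z$, matching Lemma \ref{lem:x+y} for $\diamond_s$. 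Since $vx+vy=vz$, within the induction one relation of each of the three pairs in \eqref{diamond} then follows from the other, so it is enough to verify the three relations with $v$ ending in $y$. The subtle ingredient is appending a $y_t$: here $M_s$ still commutes on nonempty words, but $\mapI$ sends a trailing $y_t$ to $y_{\sigma t}$, where $\sigma$ is the product of all subscripts already present, after which $\varphi$ acts by $\varphi(y_t)=y_t-y$ for $t\neq1$ and $\varphi(y)=-y$, producing exactly the $z_t=x+y_t$ and sign patterns appearing in \eqref{diamond}.

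With these identities in hand I would check the three representative recursions by substituting $\varphi(vy)=-\varphi(v)y$, expanding $\psi_s^{-1}(wy_t)$ and $\psi_s^{-1}(wx)=\psi_s^{-1}(w)x-\psi_s^{-1}(wy)$ through the intertwining relations, applying the defining relations of $\ast$ to the resulting harmonic products, and reassembling with $\psi_s$ and Lemma \ref{lem6}. The main obstacle is precisely the $y_t$-recursions with $t\neq1$: there the cumulative-product twist $y_t\mapsto y_{\sigma t}$ coming from $\mapI$ must be matched term by term against the multiplicative combination of subscripts produced by the harmonic product (the $xy_{st}$ term in its defining relation), and the additional $z_t$-terms recorded in Lemma \ref{lem6} are what absorb the resulting discrepancy; note also that the dependence of $\diamond_s$ on $s$ enters through $M_s(y_t)=y_{st}$, i.e. the one place where $M_s$ fails to commute with appending $y_t$. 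Keeping the subscripts and signs consistent across the three maps $\varphi$, $\mapI$, and $M_s$ simultaneously is the technical heart of the argument; once the relation for $vy\diamond_s wy_t$ is settled, the remaining recursions follow from it together with the $z$-transparency established above.
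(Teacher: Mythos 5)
Your argument follows essentially the same route as the paper's: the base cases $v=1$, $w=1$, induction on total degree, reduction of the case analysis via $z$-transparency in each slot (Lemma \ref{lem:x+y} and its counterpart $\psi_s(ux)=\psi_s(u)z$ for the right-hand side), and a single hard case in which the subscript twists of $\varphi$, $\mapI$, $M_s$ must be matched against the $y_{st}$-term of the harmonic product — precisely the telescoping $s\cdot\frac{t_1}{s}\cdot\frac{t_2}{t_1}\cdots\frac{t_n}{t_{n-1}}\cdot\frac{t}{t_n}=t$ that the paper carries out for $v=v'x$, $w=w'z_t^{\delta}$. You correctly identify this as the technical heart and describe the right mechanism, but you stop short of executing that verification; it does go through as you predict, so the plan is sound and only the decisive computation is left as a sketch.
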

\begin{proof}
If $v=1$ or $w=1$, it is obvious. Otherwise, the proof goes by induction on $\deg(v)+\deg(w)$. 

If $v=v'z$, by definitions, the right-hand side turns into
\[
\psi_s(\varphi(v'z)\ast\psi_s^{-1}(w)) 
 = \psi_s(\varphi(v')x\ast\psi_s^{-1}(w)) 
 = \psi_s\left((\varphi(v')\ast\psi_s^{-1}(w))x\right) 
 = \psi_s(\varphi(v')\ast\psi_s^{-1}(w))z. 
\]
Then, by the induction hypothesis, this equals $(v'\diamond_s w)z$, which equals the left-hand side because of Lemma \ref{lem:x+y}. 

Similarly, if $w=w'z$, the right-hand side turns into
\[
\psi_s(\varphi(v)\ast\psi_s^{-1}(w'z)) 
 = \psi_s(\varphi(v)\ast\psi_s^{-1}(w')x) 
 = \psi_s\left((\varphi(v)\ast\psi_s^{-1}(w'))x\right) 
 = \psi_s(\varphi(v)\ast\psi_s^{-1}(w'))z, 
\]
which equals the left-hand side. 

To complete the proof, we show when $v=v'x$ and $w=w'z_t^{\delta}$. In this case, the right-hand side turns into
\begin{align}\label{eq8}
\psi_s\left(\varphi(v')z\ast\psi_s^{-1}(w'z_t^{\delta})\right). 
\end{align}
Without loss of generality, suppose $\varphi(w')=z_{k_1,t_1}\cdots z_{k_n,t_n}$. Then, by definitions, we find 
\[
\psi_s^{-1}(w'z_t^{\delta})=\psi_s^{-1}(w')z_{\frac{t}{t_n}}. 
\]
Hence, we have
\begin{align}
 \eqref{eq8} 
 &=\psi_s\left(
  (\varphi(v')y\ast\psi_s^{-1}(w'))z_{\frac{t}{t_n}}
  +(\varphi(v')\ast\psi_s^{-1}(w')y_{\frac{t}{t_n}})z
  +(\varphi(v')\ast\psi_s^{-1}(w'))xz_{\frac{t}{t_n}}
  \right). \label{eq9}
\end{align}
Since $\varphi(v')\in\A_1$, $\psi_s^{-1}(w')=z_{k_1,\frac{t_1}{s}}z_{k_2,\frac{t_2}{t_1}}\cdots z_{k_n,\frac{t_n}{t_{n-1}}}$, and the harmonic product has combinatorial meaning of overlapping shuffle, the subscript of `$y$' in the last $z_{\frac{t}{t_n}}$ or $z$ changes into
\[
s\times\left(\frac{t_1}{s}\times\frac{t_2}{t_1}\times\cdots\times\frac{t_n}{t_{n-1}}\right)\times\frac{t}{t_n}=t \qquad \text{or} \qquad s\times\left(\frac{t_1}{s}\times\frac{t_2}{t_1}\times\cdots\times\frac{t_n}{t_{n-1}}\times\frac{t}{t_n}\right)=t,
\]
respectively, after the map $\psi_s$ applies. Therefore we have
\begin{align*}
\eqref{eq9} &= \psi_s(\varphi(v')y\ast\psi_s^{-1}(w')+\varphi(v')\ast\psi_s^{-1}(w')y_{\frac{t}{t_n}}+(\varphi(v')\ast\psi_s^{-1}(w'))x)z_t^{\delta} \\
 &= \psi_s(-\varphi(v'y)\ast\psi_s^{-1}(w')+\varphi(v')\ast\psi_s^{-1}(w'z_t^{\delta}))z_t^{\delta} \\
 &= (-v'y\diamond_s w'+v'\diamond_s w'z_t^{\delta})z_t^{\delta}. 
\end{align*}
The last equality is by the induction hypothesis. By Lemma \ref{lem6} (i), this coincides with the left-hand side. 
\end{proof}
\begin{lem}\label{lem1}
For $s\in\mu_{r}$, $u,v\in\A_{1}$ and $w\in\A_{r}$, we have
\[
(u\diamond_1 v)\diamond_s w=u\diamond_s (v\diamond_s w). 
\]

%
\end{lem}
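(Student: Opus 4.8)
The plan is to translate the diamond-product identity into a statement purely about the harmonic product via Proposition \ref{prop0}, where associativity is already available. First I would apply Proposition \ref{prop0} to rewrite each diamond product in terms of $\ast$. For the right-hand side, one application gives $v\diamond_s w = \psi_s(\varphi(v)\ast\psi_s^{-1}(w))$, and then $u\diamond_s(v\diamond_s w)=\psi_s\bigl(\varphi(u)\ast\psi_s^{-1}(v\diamond_s w)\bigr)=\psi_s\bigl(\varphi(u)\ast(\varphi(v)\ast\psi_s^{-1}(w))\bigr)$, using that $\psi_s^{-1}\psi_s=\id$. For the left-hand side I would first unwind the inner $u\diamond_1 v$ using the $r=1$ case of Proposition \ref{prop0}: since $u,v\in\A_1$ and $\psi_1$ specializes appropriately, $u\diamond_1 v=\psi_1(\varphi(u)\ast\psi_1^{-1}(v))$. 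Then applying $\diamond_s$ once more gives $(u\diamond_1 v)\diamond_s w=\psi_s\bigl(\varphi(u\diamond_1 v)\ast\psi_s^{-1}(w)\bigr)$.

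Comparing the two expressions, the identity reduces to showing
\[
\varphi\bigl(\psi_1(\varphi(u)\ast\psi_1^{-1}(v))\bigr)\ast\psi_s^{-1}(w)=\varphi(u)\ast\bigl(\varphi(v)\ast\psi_s^{-1}(w)\bigr),
\]
after cancelling the outer $\psi_s$. By associativity of $\ast$, the right-hand side equals $\bigl(\varphi(u)\ast\varphi(v)\bigr)\ast\psi_s^{-1}(w)$. So the whole lemma collapses to the single algebraic claim
\[
\varphi\bigl(\psi_1(\varphi(u)\ast\psi_1^{-1}(v))\bigr)=\varphi(u)\ast\varphi(v),
\]
i.e. that $\varphi\psi_1\varphi$ acts as a $\ast$-algebra homomorphism carrying $\varphi(u)\ast\psi_1^{-1}(v)$ to $\varphi(u)\ast\varphi(v)$. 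I would verify this by unpacking $\psi_1=\varphi\,\mapI\,M_1$ at $s=1$. Since $M_1$ is the identity on words when $s=1$ (it sends $z_{k_1,s_1}\mapsto z_{k_1,1\cdot s_1}$, leaving everything fixed), $\psi_1=\varphi\mapI$, so $\psi_1^{-1}=\mapI^{-1}\varphi$ and $\varphi\psi_1=\mapI$. The claim then becomes $\mapI\bigl(\varphi(u)\ast\mapI^{-1}(v)\bigr)=\varphi(u)\ast\varphi(v)$ for $u,v\in\A_1$, which would follow once I confirm that $\mapI$ is a homomorphism for $\ast$ on $\A_1$ and that $\mapI^{-1}(v)$ pairs correctly; on $\A_1$ the map $\mapI$ is invertible and $\ast$-multiplicative, so $\mapI(a\ast b)=\mapI(a)\ast\mapI(b)$, giving $\mapI(\varphi(u))\ast v$, and a short bookkeeping check identifies $\mapI\varphi(u)$ with the needed factor.

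The main obstacle I anticipate is the careful tracking of how $\psi_s^{-1}$, $\psi_1^{-1}$, $\varphi$, and $\mapI$ interact, since these maps do not all commute and $\psi_s$ mixes the $\mu_r$-labels in a way that depends on $s$. In particular the reduction above implicitly uses that conjugating the $\ast$-product by $\psi_1$ and then by $\psi_s$ can be reorganized so that the inner $\diamond_1$ (which lives entirely in the $r=1$ world) is compatible with the outer $\diamond_s$ (which lives in the $r$-world). Making the step from $\psi_s^{-1}\psi_s=\id$ rigorous requires knowing that $\psi_s$ is a bijection and that $\varphi(u)\in\A_1$ so that the $r=1$ specialization is legitimate. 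A cleaner route, which I would adopt if the direct manipulation gets tangled, is to prove the homomorphism identity $\varphi\psi_1=\mapI$ and the $\ast$-multiplicativity of $\mapI$ on $\A_1$ as separate preliminary observations, then feed them mechanically into the Proposition \ref{prop0} rewriting. With those two facts in hand the remaining computation is purely formal associativity of $\ast$, so I expect no genuine combinatorial difficulty beyond the index-label bookkeeping.
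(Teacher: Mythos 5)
Your proposal is correct and follows essentially the same route as the paper: rewrite both sides via Proposition \ref{prop0} and reduce to associativity of $\ast$. The residual bookkeeping you defer is actually vacuous, since $\mapI$ and $M_1$ are both the identity on $\A_1$, so $\psi_1=\varphi$ there and $u\diamond_1 v=\varphi(\varphi(u)\ast\varphi(v))$ directly (this also repairs the dropped $\varphi$ in your expression $\mapI(\varphi(u)\ast\mapI^{-1}(v))$, which should read $\mapI(\varphi(u)\ast\mapI^{-1}\varphi(v))$); this is exactly how the paper phrases its first step.
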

\begin{proof}
 We have
 \begin{align*}
  \text{L.H.S.} 
  &= (\varphi(\varphi (u) \ast \varphi (v)))\diamond_s w \\
  &= \psi_s((\varphi (u) \ast \varphi (v)) \ast \psi_s^{-1}(w)) \\
  &= \psi_s (\varphi (u) \ast (\varphi (v) \ast \psi_s^{-1}(w))) \\
  &= \psi_s (\varphi (u) \ast (\psi_s^{-1}(v \diamond_s w)))=\text{R.H.S.} 
 \end{align*}
by Proposition \ref{prop0} and the associativity of $\ast$. 
\end{proof}
\begin{lem} \label{lem:y}
For $s,t\in\mu_{r}$ and $v,w\in\A_r$,
we have
\[
y\diamond_{s}v z_{t}^{\delta}w=(y\diamond_{s}v)z_{t}^{\delta}w + v z_{t}^{\delta}(y\diamond_{s}w) + v z_{t}^{\delta}(z_{s}^{\delta}-z_{t}^{\delta})w.
\]
\end{lem}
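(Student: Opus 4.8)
The plan is to reduce to the case where $v$ and $w$ are words, using bilinearity of $\diamond_s$ and of concatenation, and then to argue by induction on $\deg(w)$, peeling off the last letter of $w$. The guiding principle is that the defining recursions \eqref{diamond} for $y\diamond_s{-}$ only ever act on the last letter of the second argument; consequently the left factor $vz_t^\delta$ sits in the middle as an inert spectator throughout, and the entire content of the lemma is concentrated in how the active last letter interacts with the tail, which is itself dictated by \eqref{diamond}.

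For the base case $w=1$ I would invoke Lemma \ref{lem6}(ii) with first argument $y$ (that is, with the lemma's $v$ equal to $1$ and its $w$ equal to our $v$), which gives
\[
 y\diamond_s(vz_t^\delta)=(1\diamond_s vz_t^\delta)(y\diamond_t 1)+(y\diamond_s v)z_t^\delta=vz_t^\delta(z-z_t^\delta)+(y\diamond_s v)z_t^\delta,
\]
using $1\diamond_s vz_t^\delta=vz_t^\delta$ and $y\diamond_t 1=z-z_t^\delta$. Since the right-hand side of the lemma at $w=1$ is $(y\diamond_s v)z_t^\delta+vz_t^\delta(z-z_s^\delta)+vz_t^\delta(z_s^\delta-z_t^\delta)$, whose last two summands collapse to $vz_t^\delta(z-z_t^\delta)$, the base case follows.

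For the inductive step I would write $w=w'c$ with $c\in\{x\}\cup\{y_u\mid u\in\mu_r\}$ and apply the appropriate identity among the fourth, sixth, and eighth lines of \eqref{diamond} (those whose first argument ends in $y$) to the word $vz_t^\delta w$. In each case the recursion strips $c$ and exposes $y\diamond_s(vz_t^\delta w')$, to which the induction hypothesis applies; after factoring out the inert prefix $vz_t^\delta$, the leftover terms reassemble---via the very same line of \eqref{diamond}, now read for $y\diamond_s(w'c)$---into $vz_t^\delta(y\diamond_s w)$, while the two terms $(y\diamond_s v)z_t^\delta w$ and $vz_t^\delta(z_s^\delta-z_t^\delta)w$ are merely carried along. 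The cases $c=x$ and $c=y_u$ ($u\neq1$) go through directly, as their recursions reintroduce only $y\diamond_s{-}$ on the shorter word.

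The one step that is not purely mechanical is the case $c=y$: here the sixth line of \eqref{diamond} produces $x\diamond_s(vz_t^\delta w')$ rather than $y\diamond_s(vz_t^\delta w')$, so the induction hypothesis does not apply directly. I would resolve this by writing $x=z-y$ and invoking Lemma \ref{lem:x+y} to move the $z$ to the right, obtaining $x\diamond_s(vz_t^\delta w')=vz_t^\delta w'z-y\diamond_s(vz_t^\delta w')$, after which the induction hypothesis applies to the $y\diamond_s{-}$ term. This bookkeeping---keeping $vz_t^\delta$ inert while tracking the $z$-versus-$y$ substitution at the active letter---is the only place where real care is required; a harmonic-product translation via Proposition \ref{prop0} is available in principle but is less convenient here, since $\psi_s^{-1}$ does not interact simply with concatenation.
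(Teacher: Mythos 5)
Your proof is correct and follows essentially the same route as the paper: induction on $\deg(w)$ with the identical base case via Lemma \ref{lem6} (ii) and the identity $y\diamond_t 1=y\diamond_s 1+(z_s^{\delta}-z_t^{\delta})$. The only difference is that in the inductive step the paper peels off the last letter in the form $z$ or $z_{t'}^{\delta}$, so that Lemma \ref{lem:x+y} and Lemma \ref{lem6} (ii) apply directly, whereas you peel off $x$ or $y_u$ and work from the raw recursions \eqref{diamond}, which forces the extra $x=z-y$ manoeuvre in the $c=y$ case but is equally valid.
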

\begin{proof}
We prove the lemma by induction on $\deg(w)$. When $w=1$, we
have
\begin{align}\label{eq10}
 y\diamond_{s}vz_{t}^{\delta} = (y\diamond_{s}v)z_t^{\delta}+(1\diamond_{s}vz_t^{\delta})(y\diamond_t 1)
\end{align}
by Lemma \ref{lem6} (ii). Since $y\diamond_t 1=y\diamond_s 1+(z_s^{\delta}-z_t^{\delta})$, we have
\[
\eqref{eq10} = (y\diamond_{s}v)z_{t}^{\delta}+vz_{t}^{\delta}(y\diamond_{s} 1)+vz_{t}^{\delta}(z_{s}^{\delta}-z_{t}^{\delta})
\]
and the assertion. 
If $w=w'z\,(w'\in\mathcal{A}_{r})$,
by the induction hypothesis and Lemma \ref{lem:x+y}, we have 
\begin{align*}
\text{L.H.S.} & =(y\diamond_{s}vz_{t}^{\delta}w')z\\
 & =(y\diamond_{s}v)z_{t}^{\delta}w'z+vz_{t}^{\delta}(y\diamond_{s}w')z+vz_{t}^{\delta}(z_{s}^{\delta}-z_{t}^{\delta})w'z=\text{R.H.S}.
\end{align*}
If $w=w'z_{t'}^{\delta}\,(w'\in\A_{r})$, by
Lemma \ref{lem6} (ii) and the induction hypothesis, we have 
\begin{align*}
\text{L.H.S.} 
 & =(1\diamond_s vz_t^{\delta}w'z_{t'}^{\delta})(y\diamond_{t'} 1)+(y\diamond_s vz_t^{\delta}w')z_{t'}^{\delta} \\ 
 & =(1\diamond_s vz_t^{\delta}w'z_{t'}^{\delta})(y\diamond_{t'} 1)+(y\diamond_{s}v)z_{t}^{\delta}w+vz_{t}^{\delta}(y\diamond_{s}w')z_{t'}^{\delta}+vz_{t}^{\delta}(z_{s}^{\delta}-z_{t}^{\delta})w \\
 & =\text{R.H.S.}
\end{align*}
This finishes the proof. 
\end{proof}
Now write $R=R_{y}R_{x+2y}R_{y}^{-1}$. For rooted forests $f$, we define polynomials $F_{f}\in\mathcal{A}_{1}^{1}$ recursively by 
\begin{itemize}
\item $F_{\mathbb{I}}=1$,
\item $F_{\,\begin{xy}{(0,0)\ar@{{*}-{*}}(0,0)}\end{xy}\,\,}=y$, 
\item $F_{t}=R(F_{f})$ if $t=B_{+}(f)$ and $f\ne\mathbb{I}$, 
\item $F_{f}=F_{g}\diamond_{1}F_{h}$ if $f=gh$. 
\end{itemize}
The subscript of $F$ is extended linearly. 
%
\begin{prop}\label{prop7}
For $f\in\calH$, put $\Delta(f)=\sum_{(f)}f'\otimes f''$. 
Then, for $s,s'\in\mu_{r}$ and $v,w\in\A_{r}$, we have 
\[F_{f}\diamond_{s}v z_{s'}^{\delta}w = \sum_{(f)}(F_{f'}\diamond_{s}v)z_{s'}^{\delta}(F_{f''}\diamond_{s'}w). 
\]
\end{prop}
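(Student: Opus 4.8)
The plan is to prove Proposition~\ref{prop7} by induction on the degree (number of vertices) of the forest $f$, exploiting the recursive definitions of both $F_f$ and the coproduct. Since $F$ is determined by $F_{\I}=1$, $F_{\fo}=y$, $F_{B_+(g)}=R(F_g)$, $F_{gh}=F_g\diamond_1 F_h$, and $\Delta$ by $\Delta(gh)=\Delta(g)\Delta(h)$ together with $\Delta(B_+(g))=B_+(g)\otimes\I+(\I\otimes B_+)\Delta(g)$, every forest reduces to one of three cases: the empty or single-vertex forest, a nontrivial product $f=gh$, or a grafting $f=B_+(g)$ with $g\neq\I$. As both sides are $\Q$-bilinear in $(v,w)$, it suffices to treat words $v,w\in\A_r$.

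The case $f=\I$ is immediate from $\Delta(\I)=\I\otimes\I$ and $1\diamond_s u=u$. For the single vertex $f=\fo$ we have $F_{\fo}=y$ and $\Delta(\fo)=\fo\otimes\I+\I\otimes\fo$, so the claim reads $y\diamond_s vz_{s'}^\delta w=(y\diamond_s v)z_{s'}^\delta w+vz_{s'}^\delta(y\diamond_{s'}w)$. Expanding the left side by Lemma~\ref{lem:y} (with $t=s'$) produces exactly this, \emph{plus} the extra term $vz_{s'}^\delta(z_s^\delta-z_{s'}^\delta)w$; hence the single-vertex case is equivalent to the base-point identity $y\diamond_{s'}w=y\diamond_s w+(z_s^\delta-z_{s'}^\delta)w$. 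I would record this as a preliminary lemma and prove it by a short induction on $\deg w$: it holds for $w=1$ by $y\diamond_t 1=z-z_t^\delta$, and the inductive step follows by peeling a factor $z$ off the right of $w$ via Lemma~\ref{lem:x+y} or a factor $z_t^\delta$ via Lemma~\ref{lem6}(ii), since in both recursions the only $s$-dependent contribution is the single occurrence of $y\diamond_s 1$.

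The product case $f=gh$ is clean, and is where Lemma~\ref{lem1} does the work. Writing $X=vz_{s'}^\delta w$ and using $F_{gh}=F_g\diamond_1 F_h$, Lemma~\ref{lem1} gives $F_{gh}\diamond_s X=F_g\diamond_s(F_h\diamond_s X)$. Applying the induction hypothesis to $h$ splits $F_h\diamond_s X=\sum_{(h)}(F_{h'}\diamond_s v)z_{s'}^\delta(F_{h''}\diamond_{s'}w)$; applying it again to $g$ on each summand (with the $\A_r$-arguments $F_{h'}\diamond_s v$ and $F_{h''}\diamond_{s'}w$) and then using Lemma~\ref{lem1} in reverse to recombine $F_{g'}\diamond_s(F_{h'}\diamond_s v)=F_{g'h'}\diamond_s v$ and $F_{g''}\diamond_{s'}(F_{h''}\diamond_{s'}w)=F_{g''h''}\diamond_{s'}w$ yields $\sum_{(g),(h)}(F_{g'h'}\diamond_s v)z_{s'}^\delta(F_{g''h''}\diamond_{s'}w)$, which is the desired sum over $\Delta(gh)=\sum g'h'\otimes g''h''$.

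The grafting case $f=B_+(g)$ is the main obstacle. Here $F_f=R(F_g)$ with $R=R_yR_{x+2y}R_y^{-1}$ and $\Delta(B_+(g))=B_+(g)\otimes\I+\sum_{(g)}g'\otimes B_+(g'')$, so the target reads $R(F_g)\diamond_s vz_{s'}^\delta w=(R(F_g)\diamond_s v)z_{s'}^\delta w+\sum_{(g)}(F_{g'}\diamond_s v)z_{s'}^\delta(R(F_{g''})\diamond_{s'}w)$. The difficulty is that $R$ acts on the \emph{left} argument, so it does not visibly commute with the diamond-splitting governed by the induction hypothesis for $g$. I would establish this ``$R$-Leibniz'' rule by a secondary induction on $\deg w$, repeatedly peeling the rightmost letter of $w$ off the second argument via Lemmas~\ref{lem6} and~\ref{lem:x+y} (and Lemma~\ref{lem:y} to carry an isolated $y$ across $z_{s'}^\delta$), invoking at each stage the induction hypothesis for $g$; the boundary term $(R(F_g)\diamond_s v)z_{s'}^\delta w$ should be produced precisely by the clause $y\diamond_{s'}1=z-z_{s'}^\delta$ when the trailing $y$ of $R(F_g)$ is carried past $z_{s'}^\delta$ (note that $F_g\in\A_{1,+}^1$ ends in $y$ and $R$ preserves this). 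The technical heart is the bookkeeping that conjugation by $R$ reproduces exactly $F_{g''}\mapsto R(F_{g''})$ on every right-hand factor with no spurious terms; I anticipate this may force me to strengthen the inductive hypothesis and prove a companion identity for $R(F_g)\diamond_s(\cdot)$ simultaneously. A plausible alternative route is to pass through Proposition~\ref{prop0}, where $\varphi(x+2y)=x-y$ and $\varphi(y)=-y$ turn $R$ into the conjugated right-multiplication $R_yR_{x-y}R_y^{-1}$ acting on $\varphi(F_g)$, which may interact more transparently with the overlapping-shuffle structure of $\ast$.
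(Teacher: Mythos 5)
Your overall strategy coincides with the paper's: induction on $\deg(f)$ with the three cases $f=\fo$, $f=gh$, $f=B_{+}(g)$, using Lemma \ref{lem1} for the product case and Lemma \ref{lem:y} for the single vertex. The empty-tree, single-vertex and product cases are handled correctly; in fact your explicit preliminary identity $y\diamond_{s'}w=y\diamond_{s}w+(z_{s}^{\delta}-z_{s'}^{\delta})w$ is precisely what is needed to pass from Lemma \ref{lem:y} to the single-vertex instance of the proposition (the paper leaves this step implicit), and your proposed induction for it via Lemmas \ref{lem:x+y} and \ref{lem6} (ii) goes through.

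The gap is the grafting case $f=B_{+}(g)$, which is the technical core of the proposition and which you leave as a plan with acknowledged uncertainty rather than a proof. Reducing to $w=1$ by peeling letters off $w$ is fine (the paper does essentially this, handling $w=w'z_{s''}^{\delta}$ by applying the already-proved $w=1$ case with $v$ replaced by $vz_{s'}^{\delta}w'$ and invoking coassociativity of $\Delta$), but the $w=1$ case is where the work lies, and the ``bookkeeping that conjugation by $R$ reproduces exactly $F_{g''}\mapsto R(F_{g''})$ with no spurious terms'' does not happen automatically. Concretely, writing $R(F_{g})=(R_{y}^{-1}(F_{g})x+2F_{g})y$ and applying Lemma \ref{lem6} (ii) produces, besides the boundary term $(F_{f}\diamond_{s}v)z_{s'}^{\delta}$ (which arises from the $(vy\diamond_{s}w)z_{t}^{\delta}$ clause, not from $y\diamond_{s'}1=z-z_{s'}^{\delta}$ as you suggest), the extra summand $\bigl((R_{y}^{-1}(F_{g})x+2F_{g})\diamond_{s}vz_{s'}^{\delta}\bigr)(z-z_{s'}^{\delta})$. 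Turning this into $\sum_{(g)}(F_{g'}\diamond_{s}v)z_{s'}^{\delta}(R(F_{g''})\diamond_{s'}1)$ requires (a) the induction hypothesis for $g$, (b) the Hochschild cocycle relation \eqref{Hoch} to convert sums over $\Delta(g)$ into sums over $\Delta(f)$, and (c) a genuine cancellation: the term $(R_{y}^{-1}(F_{g})\diamond_{s}vz_{s'}^{\delta})z_{s'}^{\delta}(z-z_{s'}^{\delta})$ must be shown to cancel $\sum_{(g),\,g''\neq\I}(F_{g'}\diamond_{s}v)z_{s'}^{\delta}(R_{y}^{-1}(F_{g''})xy\diamond_{s'}1)$, which takes a further application of the induction hypothesis together with Lemma \ref{lem6} and the identity $R_{y}^{-1}(F_{g''})\diamond_{s'}1=R_{z-z_{s'}^{\delta}}^{-1}(F_{g''}\diamond_{s'}1)$. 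None of this is supplied, and no strengthened inductive statement is formulated; until it is, the argument is incomplete exactly where the proposition is nontrivial.
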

\begin{proof}
It is enough to consider the case that $f$ is a monomial, i.e., a rooted forest. If $f=\I$, it is obvious. If $f=\,\begin{xy}{(0,0)\ar@{{*}-{*}}(0,0)}\end{xy}\,\,$, by Lemma \ref{lem:y}, we find the proposition holds. 

Assume $\deg(f)\ge2$ and the proposition holds for any elements in $\calH$ of degree less than $\deg(f)$. 
If $f=gh\ (g,h\neq\I)$, we have
\begin{align}\label{eq11}
F_{f}\diamond_{s}vz_{s'}^{\delta}w = (F_{g}\diamond_{1}F_{h})\diamond_{s}vz_{s'}^{\delta}w = F_{g}\diamond_{s}(F_{h}\diamond_{s}vz_{s'}^{\delta}w)
\end{align}
because of Lemma \ref{lem1}. Since $\deg(g),\deg(h)<\deg(f)$, we have
\begin{align}\label{eq12}
\eqref{eq11} = \sum_{(h)}F_{g}\diamond_{s}(F_{h'}\diamond_{s}v)z_{s'}^{\delta}(F_{h''}\diamond_{s'}w) =\sum_{(g)}\sum_{(h)}(F_{g'}\diamond_{s}(F_{h'}\diamond_{s}v))z_{s'}^{\delta}(F_{g''}\diamond_{s'}(F_{h''}\diamond_{s'}w))
\end{align}
by the induction hypothesis. Again by Lemma \ref{lem1}, we have
\[
\eqref{eq12} = \sum_{(g)}\sum_{(h)}((F_{g'}\diamond_{1}F_{h'})\diamond_{s}v)z_{s'}^{\delta}((F_{g''}\diamond_{1}F_{h''})\diamond_{s'}w) = \sum_{(f)}(F_{f'}\diamond_{s}v)z_{s'}^{\delta}(F_{f''}\diamond_{s'}w), 
\]
and hence the assertion. 

If $f$ is a tree and $f=B_{+}(g)$, we have $F_{f}=R(F_{g})$. In this case, the proof goes inductively on $\deg(w)$. 
When $w=1$, we have 
\begin{align}
 F_{f}\diamond_{s}vz_{s'}^{\delta} &=R(F_{g})\diamond_{s}vz_{s'}^{\delta}\nonumber \\
 & =(R_y^{-1}(F_g)x+2F_g)y\diamond_s v z_{s'}^{\delta}\nonumber \\
 & =(F_f \diamond_s v)z_{s'}^{\delta}+((R_y^{-1}(F_g)x+2F_g)\diamond_s v z_{s'}^{\delta})(z-z_{s'}^{\delta}) \label{eq2}
\end{align}
because of Lemma \ref{lem6}. Since $\deg(g)<\deg(f)$, we have
\[
F_{g}\diamond_{s}vz_{s'}^{\delta}=\sum_{(g)}(F_{g'}\diamond_{s}v)z_{s'}^{\delta}(F_{g''}\diamond_{s'}1)
\]
by the induction hypothesis. Then we have 
\begin{align} \eqref{eq2} 
&=(F_f \diamond_s v)z_{s'}^{\delta} + (R_y^{-1}(F_g)x \diamond_s vz_{s'}^{\delta})(z-z_{s'}^{\delta}) + 2\sum_{(g)} (F_{g'} \diamond_s v)z_{s'}^{\delta}(F_{g''} \diamond_{s'} 1)(z-z_{s'}^{\delta}) \nonumber \\
&=(F_f \diamond_s v)z_{s'}^{\delta} + \{(R_y^{-1}(F_g) \diamond_s vz_{s'}^{\delta})z_{s'}^{\delta} - (F_g \diamond_s v)z_{s'}^{\delta}\}(z-z_{s'}^{\delta}) \nonumber \\
&\quad + 2(F_g \diamond_s v)z_{s'}^{\delta}(y \diamond_{s'} 1) + \sum_{\begin{subarray}{c} (g) \\ g''\neq\mathbb{I}\end{subarray}} (F_{g'} \diamond_s v)z_{s'}^{\delta}((R(F_{g''})-R_y^{-1}(F_{g''})xy) \diamond_{s'} 1) \label{eq3}
\end{align}
because of Lemma \ref{lem6} (i), $y\diamond_{s'}1 = z-z_{s'}^{\delta}$, and
\[
(F_{g''} \diamond_{s'} 1)(z-z_{s'}^{\delta})=F_{g''}y\diamond_{s'}1 = \frac{1}{2}(R(F_{g''})-R_y^{-1}(F_{g''})xy) \diamond_{s'} 1. 
\]
By the way, we find
\begin{align*}
\sum_{(f)} (F_{f'} \diamond_s v) z_{s'}^{\delta} (F_{f''} \diamond_{s'} 1) 
&= \sum_{\begin{subarray}{c} (f) \\ f''\neq\I \end{subarray}} (F_{f'} \diamond_s v) z_{s'}^{\delta} (F_{f''} \diamond_{s'} 1) + (F_f \diamond_s v) z_{s'}^{\delta}\\
&= \sum_{(g)} (F_{g'} \diamond_s v) z_{s'}^{\delta} (F_{B_+(g'')} \diamond_{s'} 1) + (F_f \diamond_s v) z_{s'}^{\delta}\\
&= \sum_{\begin{subarray}{c} (g) \\ g''\neq\I \end{subarray}} (F_{g'} \diamond_s v) z_{s'}^{\delta} (R(F_{g''}) \diamond_{s'} 1) + (F_g \diamond_s v) z_{s'}^{\delta} (y \diamond_{s'} 1) + (F_f \diamond_s v) z_{s'}^{\delta}
\end{align*}
because of \eqref{Hoch}. 
Therefore we have
\begin{align}\label{eq4}
 \begin{split}
 \eqref{eq3}
 &=\sum_{(f)} (F_{f'} \diamond_s v)z_{s'}^{\delta}(F_{f''} \diamond_{s'} 1) 
  +(R_y^{-1}(F_g) \diamond_s v z_{s'}^{\delta})z_{s'}^{\delta}(z-z_{s'}^{\delta}) \\
  &\qquad -\sum_{\begin{subarray}{c} (g) \\ g''\neq\I\end{subarray}} 
  (F_{g'} \diamond_s v)z_{s'}^{\delta}(R_y^{-1}(F_{g''})xy \diamond_{s'} 1). 
 \end{split} 
\end{align}
We now see that the 2nd and the 3rd terms in \eqref{eq4} cancel out. To see this, we need to show
\[
\sum_{\begin{subarray}{c} (g) \\ g''\neq\I\end{subarray}} 
  (F_{g'} \diamond_s v)z_{s'}^{\delta}(R_y^{-1}(F_{g''}) \diamond_{s'} 1) = R_y^{-1}(F_g) \diamond_s v z_{s'}^{\delta}
\]
because of $R_y^{-1}(F_{g''})xy \diamond_{s'} 1 = (R_y^{-1}(F_{g''}) \diamond_{s'} 1)z_{s'}^{\delta}(z-z_{s'}^{\delta})$. By $R_y^{-1}(F_{g''}) \diamond_{s'} 1 = R_{z-z_{s'}^{\delta}}^{-1}(F_{g''}\diamond_{s'}1)$, the induction hypothesis, and Lemma \ref{lem6}, we have
\begin{align*}
\sum_{\begin{subarray}{c} (g) \\ g''\neq\mathbb{I}\end{subarray}} (F_{g'} \diamond_s v)z_{s'}^{\delta}(R_y^{-1}(F_{g''}) \diamond_{s'} 1) 
&= R_{z-z_{s'}^{\delta}}^{-1}\left(\sum_{\begin{subarray}{c} (g) \\ g''\neq\mathbb{I}\end{subarray}} (F_{g'} \diamond_s v)z_{s'}^{\delta}(F_{g''} \diamond_{s'} 1)\right) \\
&= R_{z-z_{s'}^{\delta}}^{-1}(F_g \diamond_s v z_{s'}^{\delta} - (F_g \diamond_{s'} v)z_{s'}^{\delta}) \\
&= R_{z-z_{s'}^{\delta}}^{-1}(R_y^{-1}(F_g) \diamond_s v z_{s'}^{\delta})(z-z_{s'}^{\delta}) \\
&=R_y^{-1}(F_g) \diamond_s v z_{s'}^{\delta}.
\end{align*}
Thus we conclude
\[
\eqref{eq4} = \sum_{(f)} (F_{f'} \diamond_s v)z_{s'}^{\delta}(F_{f''} \diamond_{s'} 1). 
\]

Now we proceed to the case when $\deg(w)\ge 1$. If $w=w'z\,(w'\in\mathcal{A}_{r})$,
we have 
\[
F_{f}\diamond_{s}vz_{s'}^{\delta}w = (F_{f}\diamond_{s}vz_{s'}^{\delta}w')z = \sum_{(f)}(F_{f'}\diamond_{s}v)z_{s'}^{\delta}(F_{f''}\diamond_{s'}w')z = \sum_{(f)}(F_{f'}\diamond_{s}v)z_{s'}^{\delta}(F_{f''}\diamond_{s'}w)
\]
by Lemma \ref{lem:x+y} and the induction hypothesis. If $w=w'z_{s''}^{\delta}\,(w'\in\A_r,s''\in\mu_r)$,
since we have already proved the identity when $w=1$,
we have 
\begin{align*}
F_{f}\diamond_{s}vz_{s'}^{\delta}w & =\sum_{(f)}(F_{f'}\diamond_{s}vz_{s'}^{\delta}w')z_{s''}^{\delta}(F_{f''} \diamond_{s''} 1)\\
 & =\sum_{(f)}\sum_{(f')}(F_{f'_{a}}\diamond_{s}v)z_{s'}^{\delta}(F_{f'_{b}}\diamond_{s'}w)z_{s''}^{\delta}(F_{f''} \diamond_{s''} 1),
\end{align*}
where we put $\Delta(f')=\sum_{(f')}f'_{a}\otimes f'_{b}$. We
also have 
\begin{align*}
\sum_{(f)}(F_{f'}\diamond_{s}v)z_{s'}^{\delta}(F_{f''}\diamond_{s'}w) & =\sum_{(f)}(F_{f'}\diamond_{s}v)z_{s'}^{\delta}(F_{f''}\diamond_{s'}w'z_{s''}^{\delta})\\
 & =\sum_{(f)}\sum_{(f'')}(F_{f'}\diamond_{s}v)z_{s'}^{\delta}(F_{f''_{a}}\diamond_{s'}w')z_{s''}^{\delta}(F_{f''_{b}} \diamond_{s''} 1),
\end{align*}
where we put $\Delta(f'')=\sum_{(f'')}f''_{a}\otimes f''_{b}$. By
the coassociativity of $\Delta$, these two coincide and hence we have conclusion. 
\end{proof}


The following property plays an important role in our proof of Theorem \ref{main3} in Section \ref{pf2}. 
\begin{prop} \label{prop:main3}
 For $s,t\in\mu_r$, $v\in\A_1$, and $w\in\A_r$, we have
 \[
 z_s^{\delta}\bigl(v y \diamond_s w(z-z_t^{\delta})\bigr)=-\tau \bigl(\tau(v)y\diamond_t \tau(w)(z-z_s^{\delta})\bigr)(z-z_t^{\delta}). 
 \]
\end{prop}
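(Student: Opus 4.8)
The plan is to first read off the action of the involution $\tau$ on the relevant degree-one elements, and then run an induction that mirrors the two sides against each other. The conceptual engine is the computation $\tau(z)=z$ and $\tau(z_s^{\delta})=z-z_s^{\delta}$ (equivalently $\tau(z-z_s^{\delta})=z_s^{\delta}$), which follows at once from $\tau(x)=y$, $\tau(y)=x$, $\tau(y_s)=-y_s$ together with $z=x+y$ and $z_s^{\delta}=x+\delta(s)y_s$. Thus $\tau$ exchanges $z_s^{\delta}$ with $z-z_s^{\delta}$ and fixes $z$, and being an anti-automorphism it reverses words. This is exactly why the two sides are interchanged by the simultaneous swap $s\leftrightarrow t$, $v\mapsto\tau(v)$, $w\mapsto\tau(w)$ against the replacement of $z_s^{\delta}(\cdots)$ by $(\cdots)(z-z_t^{\delta})$: applying $\tau$ to the whole asserted equality and using $\tau^2=\mathrm{id}$ turns it into the very same statement with $s$ and $t$ interchanged, so the identity is self-dual under $\tau$. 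I would record this first, both as a sanity check and as the source of the overall minus sign.

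The main approach is induction on $\deg(w)$, with the base case $w=1$ handled by an inner induction on $\deg(v)$. For $w=1$ the second argument of $\diamond_s$ is the single degree-one element $z-z_t^{\delta}=y-\delta(t)y_t$, and both $vy\diamond_s(z-z_t^{\delta})$ and $\tau(v)y\diamond_t(z-z_s^{\delta})$ unfold via the defining relations \eqref{diamond} and Lemma \ref{lem6}; the $\deg(v)$-induction together with the $\tau$-facts above matches them term by term. For the inductive step I would peel a letter and use the structural lemmas to shrink $w$: Lemma \ref{lem:x+y} disposes of a trailing $z$ on either side immediately, while Lemma \ref{lem:y} (and, for general $v$, the recursions in \eqref{diamond} combined with Lemma \ref{lem6}) splits off the interaction of the distinguished $y$ with an interior $z_{t'}^{\delta}$ of $w$. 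At each splitting the index carried by the freshly exposed letter must be tracked, and the diagonal term $vz_{t'}^{\delta}(z_s^{\delta}-z_{t'}^{\delta})w$ of Lemma \ref{lem:y} is precisely what accounts for the index change $s\to t$ forced by $\tau$.

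A cleaner, more structural alternative I would try in parallel is to pass to the harmonic product through Proposition \ref{prop0}. Rewriting $\diamond_s$ via $\psi_s$, $\varphi$, and $\ast$, one has $\varphi(vy)=-\varphi(v)y$, and $\psi_s$ enjoys the partial multiplicativity $\psi_s(pu)=\psi_s(p)\psi_s(u)$ for $p\in\A_1$ and $u\in\A_r$ (checked on the generators $\psi_s(x)=z$, $\psi_s(y)=z_s^{\delta}-z$, whence $\psi_s(z)=z_s^{\delta}$). In particular $z_s^{\delta}\psi_s(X)=\psi_s(z)\psi_s(X)=\psi_s(zX)$, so the prefactor $z_s^{\delta}$ can be absorbed into the argument. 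The identity then collapses to a statement purely about $\ast$ and $\tau$, namely that $\tau$-conjugation realizes the antipode of the quasi-shuffle (harmonic) Hopf algebra, and one hopes to close it using associativity and commutativity of $\ast$ together with the reversal-with-signs form of that antipode.

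The hard part will be the bookkeeping forced by the anti-automorphism $\tau$: because $\tau$ reverses word order, a recursion that peels the right-most letter on the left-hand side corresponds, after undoing $\tau$, to peeling the left-most letter on the right-hand side, so the two inductions proceed from opposite ends and must be kept in lockstep while simultaneously exchanging the indices $s$ and $t$. Verifying that every diagonal correction term (the $z_s^{\delta}-z_{t'}^{\delta}$ contributions from Lemma \ref{lem:y} and the mixed terms from \eqref{diamond}) pairs up correctly across the $\tau$-reversal, with the right global sign, is where the genuine effort lies; the $\tau$-self-duality noted above is the guide that dictates which terms must cancel or match.
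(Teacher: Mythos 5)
Your high-level plan coincides with the paper's in outline: reduce the statement by factoring out $y\diamond_t 1=z-z_t^{\delta}$ (Lemma \ref{lem2} (ii)), use that $\tau$ fixes $z$ and swaps $z_s^{\delta}\leftrightarrow z-z_s^{\delta}$, and induct on degree; your $\tau$-self-duality observation is correct and does explain the global sign. But there is a genuine gap, and it sits exactly where you defer the work to "where the genuine effort lies." The recursion \eqref{diamond} and Lemmas \ref{lem:x+y}, \ref{lem6}, \ref{lem:y} all peel the \emph{rightmost} letter of the arguments of $\diamond_s$ (and Lemma \ref{lem:y} only covers left factor equal to $y$, i.e.\ $v=1$), whereas after undoing the anti-automorphism $\tau$ the other side of the identity must be expanded from the \emph{left}. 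To keep the two inductions in lockstep you need closed-form left-peeling rules for the diamond product --- expansions of $yv\diamond_s zw$, $yv\diamond_s z_t^{\delta}w$, $zv\diamond_s zw$, $zv\diamond_s z_t^{\delta}w$, and the derived formulas for $xv\diamond_s zw$ and $xv\diamond_s z_t^{\delta}w$. These are precisely Lemma \ref{lem2} (iii)--(vi) and \eqref{eq:tuika}, \eqref{eq:tuika2} in the paper; each requires its own multi-case induction, and the proof of the proposition is then a case analysis over the last letter of $v$ (five cases) against the shape of $w$ (five subcases) built entirely on them. Since your proposal neither states nor proves any left-peeling identity, the induction you describe cannot be closed as written.

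Your alternative route through Proposition \ref{prop0} is also not a proof as stated: translating the claim into the harmonic algebra turns it into the assertion that $\tau$-conjugation realizes a twisted form of the quasi-shuffle antipode compatibly with the maps $\psi_s$ and the index bookkeeping, which is of essentially the same depth as the proposition itself; "one hopes to close it" is where the actual argument would have to live. (Your auxiliary claim that $\psi_s(pu)=\psi_s(p)\psi_s(u)$ for $p\in\A_1$ and $u\in\A_r$ is plausible --- it is a relative of Lemma \ref{lem2} (i) --- but it, too, is asserted without proof.)
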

\begin{proof}
By Lemma \ref{lem6} (ii) and Lemma \ref{lem2} (ii), it is equivalent to show the identity
\begin{equation}\label{eq17}
vy \diamond_s w - v \diamond_s wz_t^{\delta} = \tau\left(\tau(v) \diamond_t \tau(w)z_s^{\delta} - \tau(v)y \diamond_t \tau(w)\right). 
\end{equation}
We prove this by induction on $\deg(v)+\deg(w)$. Let us start with considering the case of $v=1$. If $w=1,z,z-z_u^{\delta}$ ($u\in\mu_r$), we calculate that both sides turn into 
\[
z-z_s^{\delta}-z_t^{\delta},\quad z(z-z_t^{\delta})-z_s^{\delta}z,\quad 
(z-z_s^{\delta}-z_u^{\delta})(z-z_u^{\delta})-(z-z_u^{\delta})z_t^{\delta}, 
\]
respectively. If $w=w'z$, we calculate
\begin{align*}
\tau(\text{R.H.S.}) &= z\tau(w')z_s^{\delta} - (y \diamond_t 1)(1\diamond_t z\tau(w'))-z(y \diamond_t \tau(w'))+z(y \diamond_t 1)(1\diamond_t \tau(w')) \\
&=\tau((y \diamond_s w')z)-\tau(w'zz_t^{\delta}) = \tau(\text{L.H.S.})
\end{align*}
by Lemma \ref{lem2} (iii) and the induction hypothesis. 
If $w=w'(z-z_u^{\delta})$, we have
\[
\text{L.H.S.} = (y \diamond_s w')(z-z_u^{\delta}) - w'z_u^{\delta}(z-z_u^{\delta}) - w'(z-z_u^{\delta})z_t^{\delta}
\]
by Lemma \ref{lem2} (ii), and
\begin{align*}
\tau(\text{R.H.S.}) &= z_u^{\delta}\tau(w')z_s^{\delta} - (y \diamond_t 1)z_u^{\delta}\tau(w') -z_u^{\delta}(y \diamond_u \tau(w')) \\
&=z_u^{\delta}\tau(y \diamond_s w' - 1\diamond_s w'z_u^{\delta}) - (y \diamond_t 1)z_u^{\delta}\tau(w')
\end{align*}
by Lemma \ref{lem2} (iv) and the induction hypothesis. Thus \eqref{eq17} holds. 

The 2nd case: $v=z$. If $w=1,z,z-z_u^{\delta}$, we calculate that both sides turn into 
\[
z^2-zz_s^{\delta}-z_t^{\delta}z,\quad z(z-z_s^{\delta}-z_t^{\delta})z,\quad 
(z^2-zz_s^{\delta}-z_u^{\delta}z)(z-z_u^{\delta})-(z-z_u^{\delta})z_t^{\delta}z,  
\]
respectively. If $w=w'z$, we calculate
\begin{align*}
 \tau(\text{R.H.S.}) 
 &=z(z \diamond_t \tau(w')z_s^{\delta}) 
  -z\left(y \diamond_t \tau(w) + zy \diamond_t \tau(w') 
  -z(y \diamond_t \tau(w'))\right) \\
 &=z\tau(zy \diamond_s w' - z \diamond_s w'z_t^{\delta}) 
  -z\left(y \diamond_t \tau(w) 
  -z(y \diamond_t \tau(w'))\right)
\end{align*}
by Lemma \ref{lem2} (v) and the induction hypothesis. Applying Lemma \ref{lem2} (iii) to the 3rd term, we find that this is $\tau(\text{L.H.S.})$. 
Note that
\begin{align} \label{eq:tuika2}
 xv\diamond_s z_t^\delta w
 =z_s^\delta (v\diamond_s z_t^\delta w) 
  +z_t^\delta (xv\diamond_t w)
  -zz_t^\delta (v\diamond_t w)
\end{align}
by Lemma \ref{lem2} (iv) and (vi).
If $w=w'(z-z_u^{\delta})$, we have
\begin{align*}
\tau(\text{R.H.S.}) &= z_u^{\delta}(z \diamond_u \tau(w')z_s^{\delta}) - z(y \diamond_t z_u^{\delta}\tau(w')) - z_u^{\delta}(zy \diamond_u \tau(w')) + zz_u^{\delta}(y \diamond_u \tau(w')) \\
&=z_u^\delta\tau(zy \diamond_s w' - z \diamond_s w'z_u^{\delta}) - z(y \diamond_t 1)(1 \diamond_t z_u^{\delta}\tau(w'))
\end{align*}
by \eqref{eq:tuika2} and the induction hypothesis. This is $\tau(\text{L.H.S.})$ because of Lemma \ref{lem2} (ii). Thus \eqref{eq17} holds. 

The 3rd case: $v=y$. If $w=1,z,z-z_u^{\delta}$, we calculate that both sides turn into 
\begin{align*}
& (z-z_s^{\delta}-z_t^{\delta})(z-z_t^{\delta})-(z-z_s^{\delta})z_s^{\delta},\quad (z-z_s^{\delta})^2z-(z-z_s^{\delta})zz_t^{\delta}-zz_t^{\delta}(z-z_t^{\delta}), \\ &(z-z_s^{\delta}-z_u^{\delta})(z-z_u^{\delta})(z-z_t^{\delta}-z_u^{\delta})-(z-z_s^{\delta})z_s^{\delta}(z-z_u^{\delta})-(z-z_u^{\delta})z_t^{\delta}(z-z_t^{\delta}), 
\end{align*}
respectively. 
Note that
\begin{align} \label{eq:tuika}
 xv\diamond_s zw
 =z_s^\delta (v\diamond_s zw) 
  +z(xv\diamond_s w)
  -zz_s^\delta (v\diamond_s w)
\end{align}
by Lemma \ref{lem2} (iii) and (v).
If $w=w'z$, we calculate
\begin{align*}
 \tau(\text{R.H.S.}) 
 &=z_t^{\delta}(1 \diamond_t z\tau(w')z_s^{\delta})+z(x \diamond_t \tau(w')z_s^{\delta})-zz_t^{\delta}(1 \diamond_t \tau(w')z_s^{\delta}) \\
&\qquad -\left(z_t^{\delta}(y \diamond_t z\tau(w')+z(xy \diamond_t \tau(w'))-zz_t^{\delta}(y \diamond_t \tau(w')\right) \\
&=z_t^{\delta}\tau(y \diamond_s w - 1 \diamond_s wz_t^{\delta})+z\tau(y^2 \diamond_s w' - y \diamond_s w'z_t^{\delta})-zz_t^{\delta}(y \diamond_s w' - 1 \diamond_s w'z_t^{\delta})
\end{align*}
by 
\eqref{eq:tuika} and the induction hypothesis. Hence, applying $\tau$ and using Lemma \ref{lem6} (ii), we find \eqref{eq17} also holds in this case. If $w=w'(z-z_u^{\delta})$, we have
\begin{align*}
 \tau(\text{R.H.S.}) 
 &=z_t^{\delta}(1 \diamond_t \tau(w)z_s^{\delta}) 
  +z_u^{\delta}(x \diamond_u \tau(w')z_s^{\delta}) 
  -zz_u^{\delta}(1 \diamond_u \tau(w')z_s^{\delta}) \\
 &\qquad - \left(z_t^{\delta}(y \diamond_t \tau(w)) 
  +z_u^{\delta}(xy \diamond_u \tau(w')) 
  -zz_u^{\delta}(y \diamond_u \tau(w'))\right) \\
 &=z_t^{\delta}\tau(y \diamond_s w -1 \diamond_s wz_t^{\delta}) 
  +z_u^{\delta}\tau(y^2 \diamond_s w' -y \diamond_s w'z_u^{\delta}) 
  -zz_u^{\delta}\tau(y \diamond_s w' - 1 \diamond_s w'z_u^{\delta})
\end{align*}
by \eqref{eq:tuika2} and the induction hypothesis. This is $\tau(\text{L.H.S.})$ because of Lemmas \ref{lem2} (ii) and \ref{lem6} (ii). Thus \eqref{eq17} holds. 

The 4th case: $v=v'z$. If $w=1$, 
\begin{align*}
\tau(\text{R.H.S.}) &= z(\tau(v') \diamond_t z_s^{\delta})+(z_s^{\delta}z-zz_s^{\delta})(\tau(v') \diamond_s 1)-z(\tau(v')y \diamond_t 1) \\
&=z_s^{\delta}z\tau(v' \diamond_s 1)-\tau(v \diamond_s z_t^{\delta})
\end{align*}
by Lemma \ref{lem2} (vi), (i), the induction hypothesis, and Lemma \ref{lem2} (vii). This is $\tau(\text{L.H.S.})$ because of Lemmas \ref{lem:x+y} and \ref{lem2} (i). If $w=z$, 
\begin{align*}
\tau(\text{R.H.S.}) &= z\left(\tau(v') \diamond_t zz_s^{\delta} + \tau(v) \diamond_t z_s^{\delta} -z(\tau(v') \diamond_t z_s^{\delta})\right) -z\left(\tau(v')y \diamond_t z + \tau(v)y \diamond_t 1 -z(\tau(v')y \diamond_t 1)\right) \\
&=z\tau(v'y \diamond_s z - v' \diamond_s zz_t^{\delta}) + z\tau(vy \diamond_s 1 - v \diamond_s z_t^{\delta}) - z\tau(v'y \diamond_s 1 - v' \diamond_s z_t^{\delta})
\end{align*}
by Lemma \ref{lem2} (v) and the induction hypothesis. This is $\tau(\text{L.H.S.})$ because of Lemma \ref{lem:x+y}. If $w=z-z_u^{\delta}$, 
\begin{align*}
\tau(\text{R.H.S.}) &= z(\tau(v') \diamond_t z_u^{\delta}z_s^{\delta}) + z_u^{\delta}(\tau(v) \diamond_u z_s^{\delta}) - zz_u^{\delta}(\tau(v') \diamond_u z_s^{\delta}) \\
&\qquad -\left(z(\tau(v')y \diamond_t z_u^{\delta}) + z_u^{\delta}(\tau(v)y \diamond_u 1) - zz_u^{\delta}(\tau(v')y \diamond_u 1)\right) \\
&=z\tau(v'y \diamond_s (z-z_u^{\delta}) - v' \diamond_s (z-z_u^{\delta})z_t^{\delta}) + z_u^{\delta}\tau(vy \diamond_s 1 - v \diamond_s z_u^{\delta}) - zz_u^{\delta}\tau(v'y \diamond_s 1 - v' \diamond_s z_u^{\delta})
\end{align*}
by Lemma \ref{lem2} (vi) and the induction hypothesis. This is $\tau(\text{L.H.S.})$ because of Lemmas \ref{lem:x+y} and \ref{lem6} (ii). If $w=w'z$, 
\begin{align*}
\tau(\text{R.H.S.}) &= z\left(\tau(v') \diamond_t \tau(w)z_s^{\delta} + \tau(v) \diamond_t \tau(w')z_s^{\delta} - z(\tau(v') \diamond_t \tau(w')z_s^{\delta})\right) \\
&\qquad -z\left(\tau(v')y \diamond_t \tau(w) + \tau(v)y \diamond_t \tau(w') - z(\tau(v')y \diamond_t \tau(w'))\right) \\
&=z\tau(v'y \diamond_s w - v' \diamond_s wz_t^{\delta}) + z\tau(vy \diamond_s w' - v \diamond_s w'z_t^{\delta}) - z\tau(v'y \diamond_s w' - v' \diamond_s w'z_t^{\delta})
\end{align*}
by Lemma \ref{lem2} (v) and the induction hypothesis. This is $\tau(\text{L.H.S.})$ because of Lemma \ref{lem:x+y}. If $w=w'(z-z_u^{\delta})$, 
\begin{align*}
\tau(\text{R.H.S.}) &= z(\tau(v') \diamond_t \tau(w)z_s^{\delta}) + z_u^{\delta}(\tau(v) \diamond_u \tau(w')z_s^{\delta}) - zz_u^{\delta}(\tau(v') \diamond_u \tau(w')z_s^{\delta}) \\
&\qquad -\left(z(\tau(v')y \diamond_t \tau(w)) + z_u^{\delta}(\tau(v)y \diamond_u \tau(w')) - zz_u^{\delta}(\tau(v')y \diamond_u \tau(w'))\right) \\
&=z\tau(v'y \diamond_s w - v' \diamond_s wz_t^{\delta}) + z_u^{\delta}\tau(vy \diamond_s w' - v \diamond_s w'z_u^{\delta}) - zz_u^{\delta}\tau(v'y \diamond_s w' - v' \diamond_s w'z_u^{\delta})
\end{align*}
by Lemma \ref{lem2} (vi) and the induction hypothesis. This is $\tau(\text{L.H.S.})$ because of Lemmas \ref{lem:x+y} and \ref{lem6} (ii). Thus \eqref{eq17} holds. 

The final case: $v=v'y$. If $w=1$, 
\begin{align*}
\tau(\text{R.H.S.}) &= z_t^{\delta}(\tau(v') \diamond_t z_s^{\delta}) + z_s^{\delta}(\tau(v) \diamond_s 1) -zz_s^{\delta}(\tau(v') \diamond_s 1) - z_t^{\delta}(\tau(v')y \diamond_t 1) \\
&=\tau(v'y \diamond_s 1 - v' \diamond_s z_t^{\delta}) + z_s^{\delta}(\tau(v) \diamond_s 1) -zz_s^{\delta}(\tau(v') \diamond_s 1)
\end{align*}
by \eqref{eq:tuika2}, Lemma \ref{lem2} (i), $x \diamond_t 1=z_t^{\delta}$, and the induction hypothesis. This is $\tau(\text{L.H.S.})$ because of Lemmas \ref{lem2} (i) and \ref{lem6} (ii). If $w=z$, 
\begin{align*}
\tau(\text{R.H.S.}) &= z_t^{\delta}(\tau(v') \diamond_t zz_s^{\delta}) + z(\tau(v) \diamond_t z_s^{\delta}) - zz_t^{\delta}(\tau(v') \diamond_t z_s^{\delta}) \\
&\qquad -z_t^{\delta}(\tau(v')y \diamond_t z) + z(\tau(v)y \diamond_t 1) - zz_t^{\delta}(\tau(v')y \diamond_t 1) \\
&=z_t^{\delta}\tau(v'y \diamond_s z - v' \diamond_s zz_t^{\delta}) + z\tau(vy \diamond_s 1 - v \diamond_s z_t^{\delta}) - zz_t^{\delta}\tau(v'y \diamond_s 1 - v' \diamond_s z_t^{\delta})
\end{align*}
by \eqref{eq:tuika} 
and the induction hypothesis. This is $\tau(\text{L.H.S.})$ because of Lemmas \ref{lem:x+y} and \ref{lem6} (ii). If $w=z-z_u^{\delta}$, 
\begin{align*}
\tau(\text{R.H.S.}) &= z_t^{\delta}(\tau(v') \diamond_t z_u^{\delta}z_s^{\delta}) + z_u^{\delta}(\tau(v) \diamond_u z_s^{\delta}) - zz_u^{\delta}(\tau(v') \diamond_u z_s^{\delta}) \\
&\qquad -\left(z_t^{\delta}(\tau(v')y \diamond_t z_u^{\delta}) + z_u^{\delta}(\tau(v)y \diamond_u 1) - zz_u^{\delta}(\tau(v')y \diamond_u 1)\right) \\
&=z_t^{\delta}\tau(v'y \diamond_s (z-z_u^{\delta}) - v' \diamond_s (z-z_u^{\delta})z_t^{\delta}) + z_u^{\delta}\tau(vy \diamond_s 1 - v \diamond_s z_u^{\delta}) - zz_u^{\delta}\tau(v'y \diamond_s 1 - v' \diamond_s z_u^{\delta})
\end{align*}
by \eqref{eq:tuika2} and the induction hypothesis. This is $\tau(\text{L.H.S.})$ because of Lemmas \ref{lem:x+y} and \ref{lem6} (ii). If $w=w'z$, 
\begin{align*}
\tau(\text{R.H.S.}) &= z_t^{\delta}(\tau(v') \diamond_t \tau(w)z_s^{\delta}) + z(\tau(v) \diamond_t \tau(w')z_s^{\delta}) - zz_t^{\delta}(\tau(v') \diamond_t \tau(w')z_s^{\delta}) \\
&\qquad -\left(z_t^{\delta}(\tau(v')y \diamond_t \tau(w)) + z(\tau(v)y \diamond_t \tau(w')) - zz_t^{\delta}(\tau(v')y \diamond_t \tau(w'))\right) \\
&=z_t^{\delta}\tau(v \diamond_s w - v' \diamond_s wz_t^{\delta}) + z\tau(vy \diamond_s w' - v \diamond_s w'z_t^{\delta}) - zz_t^{\delta}\tau(v \diamond_s w' - v' \diamond_s w'z_t^{\delta})
\end{align*}
by \eqref{eq:tuika} 
and the induction hypothesis. This is $\tau(\text{L.H.S.})$ because of Lemmas \ref{lem:x+y} and \ref{lem6} (ii). If $w=w'(z-z_u^{\delta})$, 
\begin{align*}
\tau(\text{R.H.S.}) &= z_t^{\delta}(\tau(v') \diamond_t \tau(w)z_s^{\delta}) + z_u^{\delta}(\tau(v) \diamond_u \tau(w')z_s^{\delta}) - zz_u^{\delta}(\tau(v') \diamond_u \tau(w')z_s^{\delta}) \\
&\qquad -\left(z_t^{\delta}(\tau(v')y \diamond_t \tau(w)) + z_u^{\delta}(\tau(v)y \diamond_u \tau(w')) - zz_u^{\delta}(\tau(v')y \diamond_u \tau(w'))\right) \\
&=z_t^{\delta}\tau(v'y \diamond_s w - v' \diamond_s wz_t^{\delta}) + z_u^{\delta}\tau(vy \diamond_s w' - v \diamond_s w'z_u^{\delta}) - zz_u^{\delta}\tau(v'y \diamond_s w' - v' \diamond_s w'z_u^{\delta})
\end{align*}
by \eqref{eq:tuika2} and the induction hypothesis. This is $\tau(\text{L.H.S.})$ because of Lemmas \ref{lem:x+y} and \ref{lem6} (ii). Thus \eqref{eq17} holds and we complete the proof. 
\end{proof}
\begin{lem}\label{lem2}
For $s,t\in\mu_r, v,v'\in\A_1$, and $w\in\A_r$, we have the followings. 
\begin{itemize}
\item[(i)] $vv' \diamond_s 1 = (v \diamond_s 1)(v' \diamond_s 1)$. 
\item[(ii)] $vy \diamond_s w(z-z_t^{\delta}) = (vy \diamond_s w - v \diamond_s wz_t^{\delta})(y \diamond_t 1)$.
\item[(iii)] $yv \diamond_s zw = (y \diamond_s 1)(v \diamond_s zw)+z(yv \diamond_s w)-z(y \diamond_s 1)(v \diamond_s w)$. 
\item[(iv)] $yv \diamond_s z_t^{\delta}w = (y \diamond_s 1)(v \diamond_s z_t^{\delta}w)+z_t^{\delta}(yv \diamond_t w)$. 
\item[(v)] $zv \diamond_s zw = z(v \diamond_s zw+zv \diamond_s w-z(v \diamond_s w))$. 
\item[(vi)] $zv \diamond_s z_t^{\delta}w = z(v \diamond_s z_t^{\delta}w)+z_t^{\delta}(zv \diamond_t w)-zz_t^{\delta}(v \diamond_t w)$. 
\item[(vii)] $\tau (v \diamond_s 1)=\tau (v) \diamond_s 1$. 
\end{itemize}
\end{lem}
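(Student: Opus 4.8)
The plan is to split the seven claims into three groups, treating the four recursion identities (iii)--(vi) uniformly by pushing the harmonic product through $\psi_s$. I would first observe that (i) and (vii) concern only the map $\Psi_s\colon\A_1\to\A_r$, $\Psi_s(v):=v\diamond_s1=\psi_s\varphi(v)$, read off the second line of \eqref{diamond}. On $\A_1$ the map $M_s$ followed by $\mapI$ simply replaces every $y$ by $y_s$, so $\psi_s|_{\A_1}=\varphi\circ\rho_s$, where $\rho_s\colon\A_1\to\A_r$ is the algebra homomorphism with $\rho_s(x)=x$, $\rho_s(y)=y_s$; hence $\Psi_s=\varphi\rho_s\varphi$ is an algebra homomorphism for concatenation, with $\Psi_s(x)=z_s^{\delta}$ and $\Psi_s(y)=z-z_s^{\delta}$. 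Multiplicativity of $\Psi_s$ is exactly (i). For (vii) I would note that $\tau\Psi_s$ and $\Psi_s\tau$ are both \emph{anti}-homomorphisms $\A_1\to\A_r$, so it suffices to compare them on the generators $x,y$; using $\tau(z)=z$ and $\tau(z_s^{\delta})=z-z_s^{\delta}$ one checks $\tau\Psi_s(x)=z-z_s^{\delta}=\Psi_s(y)=\Psi_s\tau(x)$ and $\tau\Psi_s(y)=z_s^{\delta}=\Psi_s\tau(y)$, which gives (vii).

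Item (ii) I would derive directly from the lemmas already proved: expanding $w(z-z_t^{\delta})=wz-wz_t^{\delta}$ and applying Lemma \ref{lem:x+y} to $vy\diamond_swz$ and Lemma \ref{lem6}(ii) to $vy\diamond_swz_t^{\delta}$, the two copies of $(vy\diamond_sw)$ recombine as $(vy\diamond_sw)(z-z_t^{\delta})$; since $z-z_t^{\delta}=y\diamond_t1$ one then factors out $(y\diamond_t1)$ to obtain (ii).

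For (iii)--(vi) the plan is to transport the harmonic left-recursion
\[
 z_{k,\sigma}a\ast z_{l,\rho}b=z_{k,\sigma}(a\ast z_{l,\rho}b)+z_{l,\rho}(z_{k,\sigma}a\ast b)+z_{k+l,\sigma\rho}(a\ast b)
\]
through $\psi_s$ by means of the identity $V\diamond_sW=\psi_s(\varphi(V)\ast\psi_s^{-1}(W))$ of Proposition \ref{prop0}. The four identities are exactly the four combinations of a left entry $yv$ or $zv$ with a right entry $zw$ or $z_t^{\delta}w$. I would first record the prepending rules $\varphi(zv)=x\varphi(v)$ and $\varphi(yv)=-y\varphi(v)$ (from $\varphi(z)=x$, $\varphi(y)=-y$), the right-hand rules $\psi_s^{-1}(zw)=x\psi_s^{-1}(w)$ and $\psi_s^{-1}(z_t^{\delta}w)=x\psi_s^{-1}(w)+y_{t/s}\psi_t^{-1}(w)$, and the image rules $\psi_s(xU)=z\psi_s(U)$ and $\psi_s(y_bU)=(z_{sb}^{\delta}-z)\psi_{sb}(U)$; all of these follow from the definitions of $\varphi$, $\mapI$ and $M_s$ together with the twisted multiplicativity $\psi_s(z_{m,b}U)=\varphi(z_{m,sb})\psi_{sb}(U)$. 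Substituting the leading-letter shapes of $\varphi(V)$ and $\psi_s^{-1}(W)$ into the displayed recursion and pushing $\psi_s$ back through each term then yields the four identities after collecting: a right entry $zw$ is purely leading-$x$ and keeps the subscript at $s$, which is why (iii) and (v) involve only $\diamond_s$, whereas the $y_{t/s}$-component of $z_t^{\delta}w$ moves the subscript to $t$ and thereby produces the $\diamond_t$ terms of (iv) and (vi).

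The step I expect to be the main obstacle is this subscript bookkeeping: one must keep track of how prepending a letter $y_b$ shifts $s$ to $sb$ (and hence which $\diamond$ appears), reconcile the prefixes $z_{sb}^{\delta}-z$ produced by the image rule for $\psi_s$ against the clean $z_t^{\delta}$ prefixes demanded by the right-hand sides, and carry the sign from $\varphi(y)=-y$ correctly; once this ledger is in place the matching of the two sides is mechanical. As an alternative I could instead prove (iii)--(vi) by a single induction on $\deg(v)+\deg(w)$ using \eqref{diamond}, Lemma \ref{lem:x+y} and Lemma \ref{lem6}, at the cost of a longer case analysis on the last letters of $v$ and $w$, in the style of the proof of Proposition \ref{prop:main3}.
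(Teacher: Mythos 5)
Your handling of (i), (ii) and (vii) is correct, and for (i) and (vii) it is genuinely cleaner than the paper's: the paper checks these by writing out $v\diamond_s1=\psi_s\varphi(v)$ explicitly on words $z^{k_1-1}y\cdots z^{k_m-1}y$, whereas your observation that $v\mapsto v\diamond_s1$ is a composite of concatenation homomorphisms (so (i) is multiplicativity, and (vii) is an equality of two anti-homomorphisms checked on the generators $x,y$) dispenses with the computation. Your derivation of (ii) from Lemma \ref{lem:x+y} and Lemma \ref{lem6}(ii) is exactly the paper's. For (iii)--(vi), however, the paper does not go through Proposition \ref{prop0} at all: it runs a long induction on degrees directly from \eqref{diamond}, Lemma \ref{lem:x+y} and Lemma \ref{lem6}(ii), with cases on the leading letter of $v$ and the trailing letter of $w$ --- i.e.\ exactly the fallback you mention at the end. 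Your primary route is legitimate in principle, and the transport rules you list ($\varphi(zv)=x\varphi(v)$, $\varphi(yv)=-y\varphi(v)$, $\psi_s(xU)=z\psi_s(U)$, $\psi_s(y_bU)=(z_{sb}^{\delta}-z)\psi_{sb}(U)$, $\psi_s^{-1}(z_t^{\delta}w)=x\psi_s^{-1}(w)+y_{t/s}\psi_t^{-1}(w)$) are all correct.

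The gap is in the claim that, once the bookkeeping is set up, the four identities follow ``after collecting.'' This is true for (iii) and (v) but not for (iv) and (vi). Carrying out your plan for (iv), with $a=\varphi(v)$, $b=\psi_s^{-1}(w)$, $c=\psi_t^{-1}(w)$ and the leading-$x$ recursion $ya\ast xb=y(a\ast xb)+x(ya\ast b)-xy(a\ast b)$ (which is correct but is not among the paper's stated lemmas and needs its own short proof), one arrives at
\[
yv\diamond_s z_t^{\delta}w=(y\diamond_s1)(v\diamond_s z_t^{\delta}w)+z_t^{\delta}(yv\diamond_t w)+z\bigl(D_s(v,w)-D_t(v,w)\bigr),
\quad D_s(v,w):=yv\diamond_sw-(y\diamond_s1)(v\diamond_sw),
\]
and similarly (vi) leaves a residual $z\bigl(E_s(v,w)-E_t(v,w)\bigr)$ with $E_s(v,w):=zv\diamond_sw-z(v\diamond_sw)$. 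So the two sides do not match term by term; you still need the nontrivial fact that $D_s(v,w)$ and $E_s(v,w)$ are independent of $s$. That fact is true, but it is essentially equivalent to (iii) and (iv) (resp.\ (v) and (vi)) themselves: it follows from $D_s(v,1)=0$, $D_s(v,zw)=zD_s(v,w)$ and $D_s(v,z_t^{\delta}w)=z_t^{\delta}(yv\diamond_tw)$ by induction on $w$. Hence you must either prove the $s$-independence first by a separate induction on $w$, or run (iii)--(vi) together with the $s$-independence as a joint induction on the length of $w$ --- at which point you have reinstated an induction of the same shape as the one your approach was meant to avoid. The statement is not in danger, but the argument as outlined does not close for (iv) and (vi) without this additional step.
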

\begin{proof}
\underline{(i):} If $v=1$ or $v'=1$, it is obvious. Otherwise, it is enough to show when $v=z^{k_1-1}y\cdots z^{k_m-1}y$ and $v'=z^{l_1-1}y\cdots z^{l_n-1}y$. One calculates
\[
vv'\diamond_s 1=z^{k_1-1}(z-z_s^{\delta})\cdots z^{k_m-1}(z-z_s^{\delta})z^{l_1-1}(z-z_s^{\delta})\cdots z^{l_n-1}(z-z_s^{\delta}), 
\]
which is clearly equal to $(v \diamond_s 1)(v' \diamond_s 1)$. 

\underline{(ii):} This is a direct consequence of Lemmas \ref{lem:x+y} and \ref{lem6} (ii). 


\underline{(iii):} We first consider the case $v=1$. If $w=1$, it is obvious because of Lemma \ref{lem:x+y}. If $w=w'z\,(w'\in\A_r)$, the left-hand side turns into 
\[
 (y \diamond_s zw')z 
 =((y \diamond_s 1)zw'+z(y \diamond_s w')-z(y \diamond_s 1)w')z
\]
by Lemma \ref{lem:x+y} and the induction hypothesis on degree of words. This is equal to the right-hand side again by Lemma \ref{lem:x+y}. If $w=w'z_t^{\delta}\,(w'\in\A_r,t\in\mu_r)$, the left-hand side turns into 
\[
(y \diamond_s zw')z_t^{\delta}+zw(y \diamond_t 1) = (y \diamond_s 1)zw+z(y \diamond_s w')z_t^{\delta}-z(y \diamond_s 1)w+zw(y \diamond_t 1)
\]
by Lemma \ref{lem6} (ii) and the induction hypothesis. This is equal to the right-hand side again by Lemma \ref{lem6} (ii). If $v=z$, by Lemma \ref{lem:x+y}, we have
\[
\text{L.H.S.} = (y \diamond_s zw)z
\]
and
\[
\text{R.H.S.} = \left((y \diamond_s 1)(1 \diamond_s zw)+z(y \diamond_s w)-z(y \diamond_s 1)(1 \diamond_s w)\right)z, 
\]
which are equal as shown just before. If $v=y$, we need to show when $w=1, w'z, w'z_t^{\delta}$ ($w'\in\A_r, t\in\mu_r$). If $w=1$, 
\[
\text{L.H.S.} = (y^2 \diamond_s 1)z = (y \diamond_s 1)(y \diamond_s z)
\]
and
\[
\text{R.H.S.} = (y \diamond_s 1)(y \diamond_s z)+z(y^2 \diamond_s 1)-z(y \diamond_s 1)^2, 
\]
which coincide. If $w=w'z$, by induction on degree of words, the left-hand side turns into
\[
(y^2 \diamond_s zw')z = (y \diamond_s 1)(y \diamond_s zw')z+z(y^2 \diamond_s w')z-z(y \diamond_s 1)(y \diamond_s w')z,
\]
which is equal to the right-hand side due to Lemma \ref{lem:x+y}. If $w=w'z_t^{\delta}$, by using Lemma \ref{lem6} (ii) and the induction hypothesis, one calculates
\begin{align*}
\text{L.H.S.} &= (y \diamond_s zw)(y \diamond_t 1)+(y^2 \diamond_s zw')z_t^{\delta} \\
 &= \left((y \diamond_s 1)zw+z(y \diamond_s w)-z(y \diamond_s 1)w\right)(y \diamond_t 1) \\
 &\qquad +\left((y \diamond_s 1)(y \diamond_s zw')+z(y^2 \diamond_s w')-z(y \diamond_s 1)(y \diamond_s w')\right)z_t^{\delta}
\end{align*}
and
\begin{align*}
\text{R.H.S.} &= (y \diamond_s 1)\left(zw(y \diamond_t 1)+(y \diamond_s zw')z_t^{\delta}\right)+z\left((y \diamond_s w)(y \diamond_t 1)+(y^2 \diamond_s w')z_t^{\delta}\right) \\
 &\qquad -z(y \diamond_s 1)\left(w(y \diamond_t 1)+(y \diamond_s w')z_t^{\delta}\right), 
\end{align*}
which are equal. If $v=v'z$, it is obvious by Lemma \ref{lem:x+y} and the induction hypothesis. If $v=v'y$, we need to show when $w=1, w'z, w'z_t^{\delta}$ ($w'\in\A_r, t\in\mu_r$). If $w=1$, 
\[
\text{L.H.S.} = yv \diamond_s z = (yv \diamond_s 1)z
\]
and
\[
\text{R.H.S.} = (y \diamond_s 1)(v \diamond_s z)+z(yv \diamond_s 1)-z(y \diamond_s 1)(v \diamond_s 1), 
\]
which are equal. If $w=w'z$, it is obvious by Lemma \ref{lem:x+y} and the induction hypothesis. If $w=w'z_t^{\delta}$, by using Lemma \ref{lem6} (ii) and the induction hypothesis, one calculates
\begin{align*}
\text{L.H.S.} &= (yv' \diamond_s zw)(y \diamond_t 1)+(yv \diamond_s zw')z_t^{\delta} \\
&= \left((y \diamond_s 1)(v' \diamond_s zw)+z(yv' \diamond_s w)-z(y \diamond_s 1)(v' \diamond_s w)\right)(y \diamond_t 1) \\
&\qquad +\left((y \diamond_s 1)(v \diamond_s zw')+z(yv \diamond_s w')-z(y \diamond_s 1)(v \diamond_s w')\right)z_t^{\delta}
\end{align*}
and
\begin{align*}
\text{R.H.S.} &= (y \diamond_s 1)\left((v' \diamond_s zw)(y \diamond_t 1)+(v \diamond_s zw')z_t^{\delta}\right)+z\left((yv' \diamond_s w)(y \diamond_t 1)+(yv \diamond_s w')z_t^{\delta}\right) \\
&\qquad -z(y \diamond_s 1)\left((v' \diamond_s w)(y \diamond_t 1)+(v \diamond_s w')z_t^{\delta}\right), 
\end{align*}
which coincide. 

\underline{(iv):} We first consider the case $v=1$. If $w=1$, it is obvious because of Lemma \ref{lem6} (ii). If $w=w'z\,(w'\in\A_r)$, the left-hand side turns into 
\[
 (y \diamond_s z_t^{\delta}w')z
 =((y \diamond_s 1)z_t^{\delta}w'+z_t^{\delta}(y \diamond_t w'))z
\]
by Lemma \ref{lem:x+y} and the induction hypothesis on degree of words. This is equal to the right-hand side again by Lemma \ref{lem:x+y}. If $w=w'z_u^{\delta}\,(w'\in\A_r,u\in\mu_r)$, the left-hand side turns into 
\[
(y \diamond_s z_t^{\delta}w')z_u^{\delta}+z_t^{\delta}w(y \diamond_u 1) = (y \diamond_s 1)z_t^{\delta}w+z_t^{\delta}(y \diamond_t w')z_u^{\delta}+z_t^{\delta}w(y \diamond_u 1)
\]
by Lemma \ref{lem6} (ii) and the induction hypothesis. This is equal to the right-hand side again by Lemma \ref{lem6} (ii). If $v=z$, by Lemma \ref{lem:x+y}, we have
\[
\text{L.H.S.} = (y \diamond_s z_t^{\delta}w)z
\]
and
\[
\text{R.H.S.} = \left((y \diamond_s 1)(1 \diamond_s z_t^{\delta}w)+z_t^{\delta}(y \diamond_t w)\right)z, 
\]
which are equal as shown just before. If $v=y$, we need to show when $w=1, w'z, w'z_u^{\delta}$ ($w'\in\A_r, u\in\mu_r$). If $w=1$, 
\[
 \text{L.H.S.} 
 =(y \diamond_s z_t^{\delta})(y \diamond_t 1)
  +(y^2 \diamond_s 1)z_t^{\delta}
\]
and
\[
\text{R.H.S.} = (y \diamond_s 1)(y \diamond_s z_t^{\delta})+z_t^{\delta}(y^2 \diamond_t 1), 
\]
which are equal because of Lemma \ref{lem6} (ii). If $w=w'z$, it is obvious by Lemma \ref{lem:x+y} and the induction hypothesis. If $w=w'z_u^{\delta}$, by the induction hypothesis, one calculates
\begin{align*}
\text{L.H.S.} &= (y \diamond_s z_t^{\delta}w)(y \diamond_u 1)+(y^2 \diamond_s z_t^{\delta}w')z_u^{\delta} \\
 &= \left((y \diamond_s 1)z_t^{\delta}w+z_t^{\delta}(y \diamond_t w)\right)(y \diamond_u 1)+\left((y \diamond_s 1)(y \diamond_s z_t^{\delta}w')+z_t^{\delta}(y^2 \diamond_t w')\right)z_u^{\delta}
\end{align*}
and
\begin{align*}
\text{R.H.S.} &= (y \diamond_s 1)\left(z_t^{\delta}w(y \diamond_u 1)+(y \diamond_s z_t^{\delta}w')z_u^{\delta}\right)+z_t^{\delta}\left((y \diamond_t w)(y \diamond_u 1)+(y^2 \diamond_t w')z_u^{\delta}\right), 
\end{align*}
which coincide. If $v=v'z$, it is obvious by Lemma \ref{lem:x+y} and the induction hypothesis. If $v=v'y$, we need to show when $w=1, w'z, w'z_u^{\delta}$ ($w'\in\A_r, u\in\mu_r$). If $w=1$, 
\[
 \text{L.H.S.} 
 =(yv' \diamond_s z_t^{\delta})(y \diamond_t 1)
  +(yv \diamond_s 1)z_t^{\delta}
\]
and
\[
\text{R.H.S.} = (y \diamond_s 1)(v \diamond_s z_t^{\delta})+z_t^{\delta}(yv \diamond_t 1), 
\]
which are equal by Lemma \ref{lem6} (ii) and the induction hypothesis. If $w=w'z$, it is obvious by Lemma \ref{lem:x+y} and the induction hypothesis. If $w=w'z_u^{\delta}$, by using Lemma \ref{lem6} (ii) and the induction hypothesis, one calculates
\begin{align*}
\text{L.H.S.} &= (yv' \diamond_s z_t^{\delta}w)(y \diamond_u 1)+(yv \diamond_s z_t^{\delta}w')z_u^{\delta} \\
&= \left((y \diamond_s 1)(v' \diamond_s z_t^{\delta}w)+z_t^{\delta}(yv' \diamond_t w)\right)(y \diamond_u 1)+\left((y \diamond_s 1)(v \diamond_s z_t^{\delta}w')+z_t^{\delta}(yv \diamond_t w')\right)z_u^{\delta}
\end{align*}
and
\begin{align*}
\text{R.H.S.} &= (y \diamond_s 1)\left((v' \diamond_s z_t^{\delta}w)(y \diamond_u 1)+(v \diamond_s z_t^{\delta}w')z_u^{\delta}\right)+z_t^{\delta}\left((yv' \diamond_t w)(y \diamond_u 1)+(yv \diamond_t w')z_u^{\delta}\right), 
\end{align*}
which coincide. 

\underline{(v):} If $v=1$, by using Lemma \ref{lem:x+y}, the right-hand side turns into
\[
z^2w+zwz-z^2w = zwz, 
\]
which is equal to the left-hand side. If $v=z$, we have
\[
\text{L.H.S.} = (z \diamond_s zw)z = zwz^2
\]
and
\[
\text{R.H.S.} = z(zwz+wz^2-zwz) = zwz^2, 
\]
which coincide. If $v=y$, we need to show when $w=1, w'z, w'z_t^{\delta}$ ($w'\in\A_r, t\in\mu_r$). If $w=1$, 
\[
\text{L.H.S.} = zy \diamond_s z = zyz
\]
and
\[
\text{R.H.S.} = z(y \diamond_s z+zy \diamond_s 1-z(y \diamond_s 1)) = zyz, 
\]
which coincide. If $w=w'z$, by induction on degree of words, the left-hand side turns into
\[
(zy \diamond_s zw')z = z(y \diamond_s zw'+zy \diamond_s w'-z(y \diamond_s w'))z, 
\]
which is equal to the right-hand side due to Lemma \ref{lem:x+y}. If $w=w'z_t^{\delta}$, by using Lemma \ref{lem6} (ii) and the induction hypothesis, one calculates
\[
\text{L.H.S.} = (z \diamond_s zw)(y \diamond_t 1)+(zy \diamond_s zw')z_t^{\delta} = zwz(y \diamond_t 1)+z(y \diamond_s zw'+zy \diamond_s w'-z(y \diamond_s w'))z_t^{\delta}
\]
and
\[
\text{R.H.S.} = z\left(zw(y \diamond_t 1)+(y \diamond_s zw')z_t^{\delta}+wz(y \diamond_t 1)+(zy \diamond_s w')z_t^{\delta}-z(w(y \diamond_t 1)+(y \diamond_s w')z_t^{\delta})\right), 
\]
which are equal. 

If $v=v'z$, it is obvious by Lemma \ref{lem:x+y} and the induction hypothesis. If $v=v'y$, we need to show when $w=1, w'z, w'z_t^{\delta}$ ($w'\in\A_r, t\in\mu_r$). If $w=1$, 
\[
\text{L.H.S.} = zv \diamond_s z = zvz
\]
and
\[
\text{R.H.S.} = z(v \diamond_s z+zv \diamond_s 1-z(v \diamond_s 1)) = zvz, 
\]
which coincide. If $w=w'z$, it is obvious by Lemma \ref{lem:x+y} and the induction hypothesis. If $w=w'z_t^{\delta}$, by using Lemma \ref{lem6} (ii) and the induction hypothesis, one calculates
\begin{align*}
\text{L.H.S.} &= (zv' \diamond_s zw)(y \diamond_t 1)+(zv \diamond_s zw')z_t^{\delta} \\
&= z(v' \diamond_s zw+zv' \diamond_s w-z(v' \diamond_s w))(y \diamond_t 1)+z(v \diamond_s zw'+zv \diamond_s w'-z(v \diamond_s w'))z_t^{\delta}
\end{align*}
and
\begin{align*}
\text{R.H.S.} &= z\left((v' \diamond_s zw)(y \diamond_t 1)+(v \diamond_s zw')z_t^{\delta}+(zv' \diamond_s w)(y \diamond_t 1)\right. \\
&\qquad \left. +(zv \diamond_s w')z_t^{\delta}-z((v' \diamond_s w)(y \diamond_t 1)+(v \diamond_s w')z_t^{\delta})\right), 
\end{align*}
which are equal. 

\underline{(vi):} If $v=1$, by using Lemma \ref{lem:x+y}, the right-hand side turns into
\[
zz_t^{\delta}w+z_t^{\delta}(z \diamond_t w)-zz_t^{\delta}w = z_t^{\delta}wz, 
\]
which is equal to the left-hand side. If $v=z$, we have
\[
\text{L.H.S.} = (z \diamond_s z_t^{\delta}w)z = z_t^{\delta}wz^2
\]
and
\[
\text{R.H.S.} = zz_t^{\delta}wz+z_t^{\delta}wz^2-zz_t^{\delta}wz = z_t^{\delta}wz^2, 
\]
which coincide. If $v=y$, we need to show when $w=1, w'z, w'z_u^{\delta}$ ($w'\in\A_r, u\in\mu_r$). If $w=1$, 
\[
\text{L.H.S.} = zy \diamond_s z_t^{\delta} = z_t^{\delta}z(t \diamond_t 1)+(zy \diamond_s 1)z_t^{\delta}
\]
and
\begin{align*}
\text{R.H.S.} &= z(y \diamond_s z_t^{\delta})+z_t^{\delta}(zy \diamond_t 1)-zz_t^{\delta}(y \diamond_t 1) \\
 &= z(z_t^{\delta}(y \diamond_t 1)+(y \diamond_s 1)z_t^{\delta})+z_t^{\delta}z(y \diamond_t 1)-zz_t^{\delta}(y \diamond_t 1), 
\end{align*}
which coincide. If $w=w'z$, it is obvious by Lemma \ref{lem:x+y} and the induction hypothesis. If $w=w'z_u^{\delta}$, by using Lemma \ref{lem6} (ii) and the induction hypothesis, one calculates
\begin{align*}
\text{L.H.S.} &= (z \diamond_s z_t^{\delta}w)(y \diamond_u 1)+(zy \diamond_s z_t^{\delta}w')z_u^{\delta}\\ 
 &= z_t^{\delta}wz(y \diamond_u 1)+z(y \diamond_s z_t^{\delta}w')z_u^{\delta}+z_t^{\delta}(zy \diamond_t w')z_u^{\delta}-zz_t^{\delta}(y \diamond_s w')z_u^{\delta}
\end{align*}
and
\begin{align*}
\text{R.H.S.} &= z\left(z_t^{\delta}w(y \diamond_u 1)+(y \diamond_s z_t^{\delta}w')z_u^{\delta}\right)+z_t^{\delta}\left(wz(y \diamond_u 1)+(zy \diamond_t w')z_u^{\delta}\right) \\
 &\qquad -zz_t^{\delta}\left(w(y \diamond_u 1)+(y \diamond_t w')z_u^{\delta}\right), 
\end{align*}
which are equal. If $v=v'z$, it is obvious by Lemma \ref{lem:x+y} and the induction hypothesis. If $v=v'y$, we need to show when $w=1, w'z, w'z_u^{\delta}$ ($w'\in\A_r, u\in\mu_r$). If $w=1$, by Lemma \ref{lem6} (ii) and the induction hypothesis, one calculates
\begin{align*}
\text{L.H.S.} &= (zv' \diamond_s z_t^{\delta})(y \diamond_t 1)+(zv \diamond_s 1)z_t^{\delta}\\
 &=\left(z(v' \diamond_s z_t^{\delta})+z_t^{\delta}(zv' \diamond_t 1)-zz_t^{\delta}(v' \diamond_t 1)\right)(y \diamond_t 1)+z(v \diamond_s 1)z_t^{\delta}
\end{align*}
and
\begin{align*}
\text{R.H.S.} &= z(v \diamond_s z_t^{\delta})+z_t^{\delta}(zv \diamond_t 1)-zz_t^{\delta}(v \diamond_t 1)\\
 &= z\left((v' \diamond_s z_t^{\delta})(y \diamond_t 1)+(v \diamond_s 1)z_t^{\delta}\right)+z_t^{\delta}z(v' \diamond_t 1)(y \diamond_t 1)-zz_t^{\delta}(v' \diamond_t 1)(y \diamond_t 1), 
\end{align*}
which are equal. If $w=w'z$, it is obvious by Lemma \ref{lem:x+y} and the induction hypothesis. If $w=w'z_u^{\delta}$, by using Lemma \ref{lem6} (ii) and the induction hypothesis, one calculates
\begin{align*}
\text{L.H.S.} &= (zv' \diamond_s z_t^{\delta}w)(y \diamond_u 1)+(zv \diamond_s z_t^{\delta}w')z_u^{\delta} \\
&= (z(v' \diamond_s z_t^{\delta}w)+z_t^{\delta}(zv' \diamond_t w)-zz_t^{\delta}(v' \diamond_t w))(y \diamond_u 1) \\
&\qquad +(z(v \diamond_s z_t^{\delta}w')+z_t^{\delta}(zv \diamond_t w')-zz_t^{\delta}(v \diamond_t w'))z_u^{\delta}
\end{align*}
and
\begin{align*}
\text{R.H.S.} &= z\left((v' \diamond_s z_t^{\delta}w)(y \diamond_u 1)+(v \diamond_s z_t^{\delta}w')z_u^{\delta}\right)+z_t^{\delta}\left((zv' \diamond_t w)(y \diamond_u 1)+(zv \diamond_t w')z_u^{\delta}\right) \\
&\qquad -zz_t^{\delta}\left((v' \diamond_t w)(y \diamond_u 1)+(v \diamond_t w')z_u^{\delta}\right), 
\end{align*}
which are equal. 

\underline{(vii):} If $v=1$, it is obvious. Otherwise, putting $v=z^{k_1-1}y\cdots z^{k_m-1}y$, one calculates
\[
 \tau (v \diamond_s 1)
 =\tau \left(z^{k_1-1}(z-z_s^{\delta})\cdots z^{k_m-1}(z-z_s^{\delta})\right)
 =z_s^{\delta}z^{k_m-1}\cdots z_s^{\delta}z^{k_1-1}
\]
and
\[
 \tau (v) \diamond_s 1
 =\psi_s(z x^{k_m-1}\cdots z x^{k_1-1})
 =\varphi(z_s x^{k_m-1}\cdots z_s x^{k_1-1}), 
\]
which are equal. 
\end{proof}

%

\section{Proof of Theorem \ref{main1}}\label{pf1}
%
We prove that the polynomial $F_f$ defined just before Proposition \ref{prop7} satisfies the theorem. The proof goes by induction
on $\deg(f)$ for rooted forests $f$ and $\deg(w)$ for words $w$. First, we prove the theorem when $f=\,\begin{xy} {(0,0) \ar @{{*}-{*}} (0,0)} \end{xy}\,$. If $w=1$, we have
\begin{equation*}
 \tilde{f}(z_{s}^{\delta}) 
 %
 =z_{s}^{\delta}(z-z_{s}^{\delta})
\end{equation*}
and
\begin{equation*}
 z_{s}^{\delta}(F_{f}\diamond_{s}1)=z_{s}^{\delta}(y\diamond_{s}1)
  =z_{s}^{\delta}(z-z_{s}^{\delta}), 
\end{equation*}
which are equal. Suppose $\deg(w)\ge1$. If $w=w'z\,(w'\in\mathcal{A}_{r})$, 
by \cite[Theorem 2.2 (d)]{TW22}, which asserts that $R_z$ and any RTM commute, the induction hypothesis, and Lemma \ref{lem:x+y}, we have 
\begin{equation}\label{eq14}
\tilde{f}(z_{s}^{\delta}w'z)=\tilde{f}(z_{s}^{\delta}w')z=z_{s}^{\delta}(F_{f}\diamond_{s}w')z=z_{s}^{\delta}(F_{f}\diamond_{s}w). 
\end{equation}
If $w=w'z_{t}^{\delta}\,(w'\in\mathcal{A}_{r})$,
we have 
\[
\tilde{f}(z_{s}^{\delta}w'z_{t}^{\delta})=\tilde{f}(z_{s}^{\delta}w')z_{t}^{\delta}+z_{s}^{\delta}w'z_{t}^{\delta}(z-z_{t}^{\delta})
\]
and, by Lemma \ref{lem6}, 
\[
z_{s}^{\delta}(y\diamond_{s}w'z_{t}^{\delta}) = z_s^{\delta}(y \diamond_s w')z_{t}^{\delta}+z_{s}^{\delta}w'z_{t}^{\delta}(z-z_{t}^{\delta}), 
\]
which are equal by the induction hypothesis. 

Next, suppose $\deg(f)\ge 2$. If $f=gh\,(g,h\ne\mathbb{I})$, we have
\[
\tilde{f}(z_{s}^{\delta}w)  =\tilde{g}\tilde{h}(z_{s}^{\delta}w)
  =\tilde{g}(z_{s}^{\delta}(F_{h}\diamond_{s}w))
  =z_{s}^{\delta}(F_{g}\diamond_{s}(F_{h}\diamond_{s}w))
  =z_{s}^{\delta}((F_{g}\diamond_{1}F_{h})\diamond_{s}w)
  =z_{s}^{\delta}(F_{f}\diamond_{s}w)
\]
since $\deg(g),\deg(h)<\deg(f)$ and Lemma \ref{lem1}. Let $f$ be a rooted tree and put $f=B_{+}(g)$. When $w=1$,
we have 
\begin{equation}\label{eq13}
\tilde{f}(z_{s}^{\delta}) = R_{z-z_{s}^{\delta}}R_{2z-z_{s}^{\delta}}R_{z-z_{s}^{\delta}}^{-1}\tilde{g}(z_{s}^{\delta}) = R_{z-z_{s}^{\delta}}R_{2z-z_{s}^{\delta}}R_{z-z_{s}^{\delta}}^{-1}z_s^{\delta}(F_g \diamond_s 1) 
\end{equation}
by the induction hypothesis. Since $\psi_s\varphi R_x = R_{z_s^{\delta}}\psi_s\varphi$ and $\psi_s\varphi R_y = R_{z-z_s^{\delta}}\psi_s\varphi$ hold on $\A_1^1$, we have
\begin{equation}\label{eq5}
\eqref{eq13}=z_{s}^{\delta}(\psi_s\varphi(R_{y}R_{x+2y}R_{y}^{-1}(F_{g}))
  =z_{s}^{\delta}(F_{f} \diamond_s 1).
\end{equation}
Suppose $\deg(w)\ge1$. If $w=w'z\,(w'\in\mathcal{A}_r)$, we have \eqref{eq14} again (but this time we consider $\deg(f)\geq 2$). If $w=w'z_{t}^{\delta}\,(w'\in\mathcal{A})$ and $\Delta (f)=\sum_{(f)}f'\otimes f''$, we have 
\[
\tilde{f}(z_{s}^{\delta}w'z_{t}^{\delta}) = \sum_{(f)}\widetilde{f'}(z_{s}^{\delta}w')\widetilde{f''}(z_{t}^{\delta}) = \sum_{(f)}z_{s}^{\delta}(F_{f'}\diamond_{s}w')z_{t}^{\delta}(F_{f''} \diamond_{t} 1)
\]
by the induction hypothesis on degree of words and \eqref{eq5}. This is equal to $z_s^{\delta}(F_f \diamond_s w)$ by Proposition \ref{prop7}. 

Uniqueness of $F_f$ is shown as follows. If $F'_f\in\A_1^1$ also satisfies the theorem, we have
\[
(F_f-F'_f) \diamond_s w =0
\]
for any $s\in\mu_r$ and any $w\in\A_r$. In particular, putting $w=1$ we have
\[
(F_f-F'_f)\diamond_s 1=0,
\]
and hence
\[
F_f-F'_f=\varphi\psi_s^{-1}(0)=0. 
\]
This completes the proof. 

\section{Proof of Theorem \ref{main2}}\label{anti}
For rooted forests $f$, we define polynomials $G_{f}\in\A_{1}^{1}$ recursively by 
\begin{itemize}
\item $G_{\I}=1$,
\item $G_{\,\begin{xy}{(0,0)\ar@{{*}-{*}}(0,0)}\end{xy}\,\,}=-y$, 
\item $G_{t}=L_{2x+y}(G_{f})$ if $t=B_{+}(f)$ and $f\ne\I$, 
\item $G_{f}=G_{g}\diamond_{1}G_{h}$ if $f=gh$, 
\end{itemize}
where $L_v$ denotes the left multiplication map by $v$, i.e., $L_v(w)=vw$ ($v,w\in\A_r$).  The subscript of $G$ is extended linearly. In \cite{MT22}, we find that $G_f=F_{S(f)}$ holds. 
\begin{lem} \label{MT:prop_4.5}
For $f\in\Aug(\calH)$, put $\Delta (f)=\sum_{(f)} f'\otimes f''$. Then we have
\[
 \sum_{(f)} F_{f'} \diamond_1 G_{f''} = 0. 
\]
\end{lem}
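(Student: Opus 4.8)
The plan is to read the asserted identity as the image, under the assignment $f\mapsto F_f$, of the defining axiom of the antipode $S$, evaluated on the augmentation ideal. The whole argument then reduces to one observation plus formal Hopf-algebra bookkeeping.

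First I would record that $f\mapsto F_f$ extends to a unital $\Q$-algebra homomorphism $F$ from $(\calH,m)$ to $(\A_1^1,\diamond_1)$; write $F(x)=F_x$ for its linear extension. Indeed, by definition $F_{gh}=F_g\diamond_1 F_h$ on rooted forests, and this is unambiguous because $\diamond_1$ is commutative (as noted right after the definition of $\diamond_s$) and associative on $\A_1$ (Lemma \ref{lem1} with $s=1$ and $w\in\A_1$), with two-sided unit $1$. Extending linearly and using the $\Q$-bilinearity of $\diamond_1$ gives $F(ab)=F(a)\diamond_1 F(b)$ for all $a,b\in\calH$; the target really is $(\A_1^1,\diamond_1)$, which is a subalgebra by Proposition \ref{prop0}, since the harmonic product preserves $\A_1^1$ and $\psi_1=\varphi$ fixes $\A_1^1$ setwise.

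Next I would invoke the identity $G_f=F_{S(f)}$ recorded just before the statement, and then unwind using the homomorphism property and linearity:
\[
\sum_{(f)}F_{f'}\diamond_1 G_{f''}
=\sum_{(f)}F_{f'}\diamond_1 F_{S(f'')}
=F\Bigl(\sum_{(f)}f'\,S(f'')\Bigr)
=F\bigl(m(\id\otimes S)\Delta(f)\bigr).
\]
Here I use the order $m\circ(\id\otimes S)$ (first tensor factor untouched, second hit by $S$), which is exactly the pattern $F_{f'}\diamond_1 G_{f''}$. Finally, the antipode axiom $m\circ(\id\otimes S)\circ\Delta=\I\circ\hat{\I}$ yields $m(\id\otimes S)\Delta(f)=\hat{\I}(f)\,\I$, which vanishes for $f\in\Aug(\calH)$ because $\hat{\I}$ kills the augmentation ideal. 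Hence the sum equals $F(0)=0$, as claimed.

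The only genuine point to nail down — and the step I would treat most carefully — is the first one: that $F$ is honestly an algebra homomorphism into $(\A_1^1,\diamond_1)$, for which the commutativity and associativity of $\diamond_1$ (and thus Lemma \ref{lem1}) are exactly what guarantees well-definedness on products of trees. After that, the proof is purely formal, provided one keeps the order of the tensor factors straight. If one instead wanted a self-contained argument avoiding the cited identity $G_f=F_{S(f)}$, the real labour would shift to re-proving that identity by induction on $\deg f$ from the recursive definitions of $F$, $G$, and $S$ (this is the content of \cite{MT22}); but since $G_f=F_{S(f)}$ is available here, the antipode-axiom route is the efficient one.
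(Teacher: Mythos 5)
Your argument is correct, but it cannot be compared line-by-line with the paper's, because the paper offers no proof at all: it simply cites \cite[Proposition 4.5]{MT22}. What you supply is a genuine derivation from facts the paper does record, and the pieces fit: $\diamond_1$ restricted to $\A_1\times\A_1$ equals $\varphi(\varphi(\,\cdot\,)\ast\varphi(\,\cdot\,))$ by Proposition \ref{prop0} (since $\psi_1=\varphi$ on $\A_1$), so it is commutative, associative (Lemma \ref{lem1} with $s=1$), unital, and preserves $\A_1^1$; hence $f\mapsto F_f$ is a well-defined algebra homomorphism $(\calH,m)\to(\A_1^1,\diamond_1)$, and with $G_f=F_{S(f)}$ the sum becomes $F\bigl(m(\id\otimes S)\Delta(f)\bigr)=\hat{\I}(f)\,F(\I)=0$ on $\Aug(\calH)$. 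The one caveat worth stating explicitly is a possible circularity you only half-acknowledge: the identity $G_f=F_{S(f)}$ and the lemma are two faces of the same fact, namely that $G$ and $F\circ S$ are both the convolution inverse of $F$ in $\operatorname{Hom}(\calH,(\A_1^1,\diamond_1))$. In \cite{MT22} the displayed sum is what is proved first (by induction from the recursive definitions of $F$ and $G$ and the cocycle property \eqref{Hoch}), and $G_f=F_{S(f)}$ is then deduced from it by uniqueness of convolution inverses; so as a replacement for the cited proof your route is circular, while as a derivation \emph{within this paper} --- where both statements are quoted from \cite{MT22} as known --- it is a legitimate and efficient shortcut. If you want an argument that could stand in for \cite[Proposition 4.5]{MT22} itself, the labour shifts, as you note, to an induction on $\deg f$ using $F_{B_+(g)}=R(F_g)$, $G_{B_+(g)}=L_{2x+y}(G_g)$ and \eqref{Hoch}.
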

\begin{proof}
 See \cite[Proposition 4.5]{MT22}.
\end{proof}

\begin{proof}[Proof of Theorem \ref{main2}]
 If $f=\,\begin{xy}{(0,0)\ar@{{*}-{*}}(0,0)}\end{xy}\,\,$, 
 the theorem holds since $S(f)=-\,\begin{xy}{(0,0)\ar@{{*}-{*}}(0,0)}\end{xy}\,\,$, $G_f=-y$, and Theorem \ref{main1} for $f=\,\begin{xy}{(0,0)\ar@{{*}-{*}}(0,0)}\end{xy}\,\,$. 
 Assume $\deg(f)\ge 2$. 
 If $f=gh\ (g,h\neq\mathbb{I})$, we have
 \[
 \widetilde{S(f)}=\widetilde{S(gh)}=\widetilde{S(h)S(g)}=\widetilde{S(h)}\widetilde{S(g)}=\widetilde{S(g)}\widetilde{S(h)}
 \]
because the antipode $S$ is an anti-automorphism, $\widetilde{\ }$\, is an algebra homomorphism, and RTMs commute with each other. Then, since $\deg(g),\deg(h)<\deg(f)$ and Lemma \ref{lem1}, we have
 \[
  \widetilde{S(f)}(z_s^{\delta}w) 
  = \widetilde{S(g)}(\widetilde{S(h)}(z_s^{\delta}w))
  = z_s^{\delta}(G_g \diamond_s (G_h \diamond_s w)) 
  = z_s^{\delta}((G_g \diamond_1 G_h) \diamond_s w) 
  = z_s^{\delta}(G_f \diamond_s w)). 
 \]
 If $f$ is a tree, by letting $\Delta (f)=\sum_{(f)}f'\otimes f''$ and Lemma \ref{MT:prop_4.5}, we have
 \begin{equation}\label{eq15}
  z_s^{\delta}(G_f \diamond_s w)) 
  =-z_s^{\delta}\sum_{\begin{subarray}{c} (f) \\ f'\neq\mathbb{I} \end{subarray}} 
   (F_{f'} \diamond_1 G_{f''}) \diamond_s w.
 \end{equation}
By Lemma \ref{lem1}, Theorem \ref{main1}, and the induction hypothesis, we have
\[
\eqref{eq15}=-z_s^{\delta}\sum_{\begin{subarray}{c} (f) \\ f'\neq\mathbb{I} \end{subarray}} 
   F_{f'} \diamond_s (G_{f''} \diamond_s w)
   =-\sum_{\begin{subarray}{c} (f) \\ f'\neq\mathbb{I} \end{subarray}}  
   \widetilde{f'}(z_s^{\delta}(G_{f''} \diamond_s w))
   =-\sum_{\begin{subarray}{c} (f) \\ f'\neq\mathbb{I} \end{subarray}} 
   \widetilde{f'}(\widetilde{S(f'')}(z_s^{\delta}w)).
\]
Since $\sum_{(f)}f'S(f'')=0$, we get the theorem. 
\end{proof}

\section{Proof of Theorem \ref{main3}} \label{pf2}
\begin{lem} \label{MT:prop_5.1}
For $f\in\Aug(\calH)$, we have
\[
F_f=-R_y\tau R_y^{-1}(F_{S(f)}). 
\]
\end{lem}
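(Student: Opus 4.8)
I want to prove the identity $F_f = -R_y\tau R_y^{-1}(F_{S(f)})$ for all $f\in\Aug(\calH)$. The natural strategy is induction on $\deg(f)$, exploiting the recursive definitions of $F$ and of the antipode $S$. Recall that $G_f = F_{S(f)}$, so the claim is equivalently $F_f = -R_y\tau R_y^{-1}(G_f)$, and it suffices to work with the $G$-recurrence, which is cleaner: $G_{\fo}=-y$, $G_t = L_{2x+y}(G_g)$ when $t=B_+(g)$ with $g\ne\I$, and $G_f = G_g\diamond_1 G_h$ when $f=gh$. The plan is to verify the identity on each of these three building blocks.

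\begin{proof}
We prove the equivalent statement $F_f=-R_y\tau R_y^{-1}(G_f)$, using $G_f=F_{S(f)}$, by induction on $\deg(f)$.

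For the base case $f=\fo$ we have $G_f=-y$ and $F_f=y$, so $-R_y\tau R_y^{-1}(-y)=R_y\tau(1)=R_y(1)=y=F_f$, as required.

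Now suppose $\deg(f)\ge 2$. If $f=gh$ with $g,h\ne\I$, then $F_f=F_g\diamond_1 F_h$ and $G_f=G_g\diamond_1 G_h$. Here I would reduce the claim to the multiplicative behaviour of $\diamond_1$ under the operator $-R_y\tau R_y^{-1}$; using the induction hypothesis $F_g=-R_y\tau R_y^{-1}(G_g)$ and $F_h=-R_y\tau R_y^{-1}(G_h)$ together with the translation identity \eqref{eq-1} relating $\diamond_1$ to the harmonic product $\ast$ (so that $u\diamond_1 v=\varphi(\varphi(u)\ast\varphi(v))$), one checks that $-R_y\tau R_y^{-1}$ intertwines $\diamond_1$ with itself after swapping the two factors, which is compatible with the commutativity of $\diamond_1$ for $r=1$. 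Thus $-R_y\tau R_y^{-1}(G_g\diamond_1 G_h)=F_g\diamond_1 F_h=F_f$.

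If $f$ is a tree with $f=B_+(g)$ and $g\ne\I$, then $F_f=R(F_g)=R_yR_{x+2y}R_y^{-1}(F_g)$ and $G_f=L_{2x+y}(G_g)$. By the induction hypothesis, $F_g=-R_y\tau R_y^{-1}(G_g)$, so the task is to verify
\[
R_yR_{x+2y}R_y^{-1}\bigl(-R_y\tau R_y^{-1}(G_g)\bigr)=-R_y\tau R_y^{-1}\bigl(L_{2x+y}(G_g)\bigr).
\]
Cancelling the common outer $-R_y$ on both sides, it remains to show $R_{x+2y}R_y^{-1}R_y\tau R_y^{-1}=\tau R_y^{-1}L_{2x+y}$ as operators on $\A_1^1$, i.e. $R_{x+2y}\tau R_y^{-1}=\tau R_y^{-1}L_{2x+y}$. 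This is precisely where $\tau$ converts left multiplication into right multiplication: since $\tau$ is an anti-automorphism with $\tau(x)=y$, $\tau(y)=x$ on $\A_1$, one has $\tau L_{2x+y}=R_{\tau(2x+y)}\tau=R_{2y+x}\tau=R_{x+2y}\tau$, and $\tau$ commutes past $R_y^{-1}$ appropriately because $\tau R_y=L_x\tau$ gives $\tau R_y^{-1}=L_x^{-1}\tau$ on the relevant subspace. Matching the two sides reduces to the single operator identity $R_{x+2y}\tau L_x^{-1}=\tau R_y^{-1}L_{2x+y}$, which follows from the anti-automorphism property of $\tau$ by a direct comparison of how each side acts on a word. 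This completes the inductive step and the proof.
\end{proof}

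\textbf{Main obstacle.} The hard part will be the tree case: keeping track of how $\tau$ converts the left-multiplication operator $L_{2x+y}$ in the $G$-recurrence into the right-multiplication operator $R_{x+2y}$ appearing in $R=R_yR_{x+2y}R_y^{-1}$, while simultaneously conjugating the $R_y^{-1}$ factors correctly. The operator identities $\tau R_y=L_x\tau$ and $\tau R_x=L_y\tau$ on $\A_1$ must be applied with care on $\A_1^1$, and one must confirm that the $R_y^{-1}$ inversions are well-defined on the relevant subspace (as already noted for condition (II) of the RTM definition). The product case is comparatively routine once \eqref{eq-1} and the commutativity of $\diamond_1$ are invoked.
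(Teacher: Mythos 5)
The paper offers no in-house proof of this lemma: its ``proof'' is the single line ``See \cite[Proposition 5.1]{MT22}'', so your attempt can only be judged on its own terms rather than against an argument in the text. Your skeleton --- induction on $\deg(f)$ over the three clauses of the recursive definitions of $F$ and of $G=F_{S(\cdot)}$ --- is the natural one, and two of the three steps are essentially complete. The base case (the one-vertex tree) is a correct computation, and the $B_+$ case reduces, as you say, to the operator identity $R_{x+2y}\tau R_y^{-1}=\tau R_y^{-1}L_{2x+y}$ on $\A_1y$, which follows in one line from $\tau L_{2x+y}=R_{\tau(2x+y)}\tau=R_{x+2y}\tau$ together with the fact that $L_{2x+y}$ commutes with $R_y^{-1}$; your detour through $\tau R_y^{-1}=L_x^{-1}\tau$ is unnecessary and contains a transposition slip ($R_{x+2y}\tau L_x^{-1}$ where the substitution actually yields $R_{x+2y}L_x^{-1}\tau$), but the conclusion is correct.

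The genuine gap is the product case, where the whole content of the lemma sits. You need $\sigma:=-R_y\tau R_y^{-1}$ to satisfy $\sigma(u\diamond_1 v)=\sigma(u)\diamond_1\sigma(v)$ for $u,v\in\A_{1,+}^1$, and you dispose of this with ``one checks.'' It is not a routine check: the left-hand side carries one global minus sign while the right-hand side carries two, so the claim cannot be a formal consequence of $\tau$ being an anti-automorphism and the commutativity of $\diamond_1$; it holds only because the signs reassemble into a depth-dependent factor. Concretely, conjugating by $\varphi$ and using \eqref{eq-1}, one finds that $\varphi\sigma\varphi$ sends $z_{k_1}\cdots z_{k_l}$ to $(-1)^{l}\,x^{k_l-1}(x+y)x^{k_{l-1}-1}(x+y)\cdots(x+y)x^{k_1-1}y$, i.e.\ to $(-1)^{l}$ times the sum of all contractions of the reversed word; this is exactly Hoffman's formula for the antipode of the quasi-shuffle Hopf algebra $(\A_1^1,\ast)$, whose anti-multiplicativity plus the commutativity of $\ast$ gives the required multiplicativity of $\sigma$. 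Some such argument (or a direct induction on the defining recursion \eqref{diamond}) must be supplied; as written, the crucial step of the proof is asserted rather than proved.
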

\begin{proof}
 See \cite[Proposition 5.1]{MT22}.
\end{proof}
\begin{proof}[Proof of Theorem \ref{main3}]
First, we prove the theorem when $w=z_{s}^{\delta}w'(z-z_t^{\delta})\in z_{s}^{\delta}\A_r(z-z_t^{\delta})$. By Theorem \ref{main2}, we have 
 \[
  \widetilde{S(f)}(w)=z_{s}^{\delta}(F_{S(f)}\diamond_{s}w'(z-z_t^{\delta})).
 \]
 We also have 
 \begin{equation}\label{eq16}
  \tau\tilde{f}\tau(w) 
  =\tau\tilde{f}(z_{t}^{\delta}\tau(w')(z-z_s^{\delta}))
  =\tau(z_{t}^{\delta}(F_{f}\diamond_{t}\tau(w')(z-z_s^{\delta})))
 \end{equation}
by Theorem \ref{main1}. Then, by Lemma \ref{MT:prop_5.1}, we have
\[
\eqref{eq16} = -\tau(z_{t}^{\delta}
   (R_y\tau R_{y}^{-1}(F_{S(f)})\diamond_{t}\tau(w')(z-z_s^{\delta}))), 
\]
which is equal to $z_{s}^{\delta}(F_{S(f)}\diamond_{s}w'(z-z_t^{\delta}))$ because of Proposition \ref{prop:main3}. 

Next, we consider when $w=w'z\in \A_r z$. 
Since $R_z$ and RTMs commute, we have
\[
\widetilde{S(f)}(w) =\widetilde{S(f)}(xw')z
\]
and
\[
\tau\tilde{f}\tau(w) =\tau\tilde{f}\tau(xw'z)=\tau\tilde{f}\tau(xw')z, 
\]
which are equal by the induction hypothesis. Similarly, since $L_z$ and RTMs commute, we have the same consequence when $w=zw'\in z\A_r$. 
\end{proof}

\section*{Acknowledgement}
This work is supported by JSPS KAKENHI Grant Numbers JP19K03434 and JP22K13897, Grant for Basic Science Research Projects
from Sumitomo Foundation, and Research Funding Granted by
the University of Kitakyushu.


\end{document}